\documentclass[a4paper,11pt]{article}
\usepackage{amssymb,amsmath,amsthm}
\usepackage{marvosym}
\usepackage{graphicx}

\usepackage{todonotes, comment} 

\usepackage{amsfonts}
\usepackage{latexsym}
\usepackage{color}
\usepackage[english]{babel}
\usepackage{hyperref}
\usepackage{tcolorbox}
\usepackage{float}

\numberwithin{equation}{section}

\newtheorem{lemma}{Lemma}[section]
\newtheorem{theorem}[lemma]{Theorem}
\newtheorem{proposition}[lemma]{Proposition}
\newtheorem{corollary}[lemma]{Corollary}
\newtheorem{remark}[lemma]{Remark}

\newtheorem{definition}[lemma]{Definition}

\newcommand{\tb}{{\tt b}}

\def\tq{{\tt q}}
\def\td{{\tt d}}
\def\tb{{\tt b}}

\newcommand{\Tr}{\operatorname{Tr}}

\newcommand{\dd}{\,\mathrm d}
\newcommand{\braket}[2]{\langle#1|#2\rangle}

\providecommand{\Rk}{R_\kappa}
\providecommand{\Bt}{B_{\Delta}\null}

\providecommand{\Xu}{X_u}
\providecommand{\Xv}{X_v}

\newcommand{\uu}[1]{u^{(#1)}}
\newcommand{\vv}[1]{v^{(#1)}}

\newcommand{\R}{\mathbb R}
\newcommand{\C}{\mathbb C}

\newcommand{\N}{\mathbb N}

\newcommand{\Id}{I}

\definecolor{awesome}{rgb}{1.0, 0.13, 0.32}
\definecolor{darkgr}{rgb}{0.0, 0.62, 0.42}
\definecolor{cyan}{rgb}{0.0, 0.72, 0.92}

\def\cM{\mathcal{M}}

\def\cH{{\mathcal{H}}}

\def\QS{{\mathcal{Q}\kern-0.3pt\mathcal{S}}}

\def\cM{{\mathcal{M}}}
\def\cL{\mathcal L}

\def\cD{{\mathcal{D}}}

\def\cU{{\mathcal{U}}}

\def\tb{{\tt b}}
\def\ti{{\tt i}}
\def\tti{{\tilde{\tt \imath}}}
\def\Fdue{\null\,\null_2\kern-1pt F_{1}\null}

\begin{document}

	\title{Lax Pair for the Kirchhoff-Pohozaev equation
	}

	\date{}
	
	
	\author{ Dario Bambusi \footnote{Dipartimento di Matematica ``Federigo Enriques'', Universit\`a degli Studi di Milano, Via Saldini 50, 20133
Milano, Italy 
 \textit{Email: } \texttt{dario.bambusi@unimi.it}},
 Emanuele Haus \footnote{Dipartimento di Matematica e Fisica, Universit\`a degli Studi Roma Tre, Largo San Leonardo Murialdo 1, 00146
Roma, Italy 
 \textit{Email: } \texttt{emanuele.haus@uniroma3.it}},
 Simone Marrocco \footnote{Dipartimento di Matematica e Fisica, Universit\`a degli Studi Roma Tre, Largo San Leonardo Murialdo 1, 00146
Roma, Italy 
 \textit{Email: } \texttt{simone.marrocco@uniroma3.it}},
 Michela Procesi \footnote{Dipartimento di Matematica e Fisica, Universit\`a degli Studi Roma Tre, Largo San Leonardo Murialdo 1, 00146
Roma, Italy 
 \textit{Email: } \texttt{michela.procesi@uniroma3.it}}}

	\maketitle
	
	\begin{abstract}
We prove that the Kirchhoff-Pohozaev equation on any compact connected
Riemannian manifold without boundary admits a Lax pair $(L,B)$, where
$L$ is a self-adjoint finite-rank perturbation of the Laplace-Beltrami
operator and $B$ is a skew-adjoint operator of finite rank, both
depending analytically on the phase space variables. We study the
spectrum of $L$ using the perturbative determinant which is known
to determine its spectrum and to admit a product representation
similar to that of the generating function in KdV. We use this to show
that the dynamics of the integrals of motion constructed in
\cite{BoitiManfrin2026,BHMP,campos} admits a Lax pair formulation with the same
operator $L$ and to get a new proof of the involutivity of these
integrals.
                 	\end{abstract}
	
	\noindent
	
	{\em Keywords:} Lax Pair, Kirchhoff-Phozaev equation,
        integrability, perturbative determinant.
	\medskip

	\def\du{u^{(1)}\null}
	\def\dv{v^{(1)}\null}
	
	\def\qdue{Q_2}
	
	\newcommand{\ip}[2]{\left\langle #1,#2\right\rangle}
	\newcommand{\ketbra}[2]{\lvert #1\rangle\langle #2\rvert}
	\newcommand{\norm}[1]{\left\lVert #1\right\rVert}
	\newcommand{\T}{\mathbb T}
	
	\def\duu{u^{(2)}}
	\def\dvv{v^{(2)}}
	\def\duuu{u^{(3)}}
	\def\dvuu{v^{(3)}}
	
	\newcommand{\gscalar}{{\mathtt q}_2}
	\newcommand{\escalar}{\mathtt d_2}
	\newcommand{\tscalar}{\mathtt{q}_3}
	\newcommand{\sscalar}{\beta}
	\def\rscalar{\mathtt d_1}
	\def\bscalar{\mathtt b_1}
	
	\def\bsym#1#2{#1\vee #2 }
	\def\anti#1#2{#1\wedge #2}
	\def\dis{\Delta}

	\newcommand{\ket}[1]{\lvert #1\rangle}
	\newcommand{\bra}[1]{\langle #1\rvert}
\def\dg{\Delta_g}

	\section{Introduction}
	Let $\cM$ be a $C^\infty$ compact connected manifold and denote by $\cL$ the real
	Hilbert space $\mathcal L:= L^2_0(\cM,\R)$, of the $L^2$ functions on
	$\cM$ with zero average, endowed by the scalar product
	\begin{equation}
		\label{prodottoHermitiano}
		\ip{f}{g}:=  \int_{\cM}  f g .
\end{equation}
Denote by $\Delta_g$ the (negative) Laplace-Beltrami operator on $\cM$
and consider the Kirchhoff-Pohozaev equation (KiPo in the following)
\begin{equation}
  \label{KiPo}
u_{tt}-\frac{\dg u}{\left(1-\langle u,\Delta_g u\rangle\right)^2}=0\ , \qquad (t,x) \in \R\times\cM, 
\end{equation}
which is Hamiltonian with respect to the standard Darboux symplectic form on $\cL\times\cL$,  with Hamiltonian function 
\begin{equation}\label{hamiltonian1int}
	H(u,v):=\frac{1}{2}\langle v,v\rangle-\frac{1}{2}
      \frac{1}{1-\langle u,\Delta_gu\rangle},
\end{equation}
where $v:=u_t$ is the variable conjugated to $u$.

The Kirchhoff-Pohozaev equation \eqref{KiPo} was first introduced by
Pohozaev \cite{P2} as an example of a Kirchhoff-type equation with a
higher order conservation law, which also allowed him to prove the
global well-posedness of the equation.  Very recently, Boiti and
Manfrin (see \cite{BoitiManfrin2025}, \cite{BoitiManfrin2026}) proved
that equation \eqref{KiPo} admits infinitely many conservation laws,
at every order. Starting from this remarkable result, the papers
\cite{campos} and \cite{BHMP} provided an explicit construction of a
whole hierarchy of constants of motion, which in the following we shall denote by $H_k$,  which furthermore pairwise
Poisson commute. In \cite{campos} such integrals
of motion were also used in order to produce a priori estimates for
the Sobolev norms of the solutions of the KiPo equation and to prove
that its solutions have a remarkable property, namely that they are
equicontinuous. So, they play the same role as the higher order
Hamiltonians in standard integrable hierarchies.

In \cite{BHMP} we showed that if $\cM$ is the
the $1d$ torus, then the restriction of KiPo to any finite dimensional
invariant subspace (corresponding to data with only a finite number of
Fourier modes different from zero) is integrable.

In the present paper we  prove that on any compact
boundaryless smooth Riemannian manifold
\begin{center}
	{\it all the  Hamiltonians $H_k$ of the }{ KiPo} {\it 
hierarchy admit a Lax pair $(L,B^{(k)})$, with $L$ a self-adjoint rank 3
perturbation of the Laplace-Beltrami operator which is independent of
$k$, while the $B^{(k)}$'s  are  skew-symmetric operators. }
\end{center}

Thus, the spectrum of the
operator $L$ is invariant under the dynamics of all the Hamiltonian
flows of the KiPo hierarchy. 

\vskip10pt
We study the spectrum of $L$. First, through an explicit argument based on the form of $-\dg-L$ we
prove that 
	{\it any eigenvalue $\nu$ of $-\dg$, with multiplicity $n_\nu$,
is also an eigenvalue of $L$ with multiplicity  at least
$(n_\nu-2)_+$, namely the positive part of $n_\nu-2$. }

\smallskip

        The
consequence is that, if $n_\nu>2$ for at least one eigenvalue
$\nu$ of the Laplacian, then the eigenvalues of $L$ do not provide a sufficient number
of integrals of motion to ensure integrability of the KiPo
equation. Indeed, even restricting the KiPo equation to a
spectral subspace corresponding to one single eigenvalue $\nu$, then one gets
a Hamiltonian system with $n_\nu$ degrees of freedom, but the
nontrivial eigenvalues of $L$ only provide 2 nontrivial integrals of
motion, which are not enough to ensure integrability. {The situation is
clearly similar if more eigenspaces are excited. We emphasize that, as
discussed in \cite{BHMP},
also the set of the integrals of motion constructed in \cite{BoitiManfrin2026,campos,BHMP}
has the same cardinality as the relevant eigenvalues of the Lax
operator, so that they are not enough to ensure integrability of KiPo
on general manifolds, e.g. on $\T^d$ with $d\geq2$.}

Then we investigate the perturbative determinant \cite{kato,yafaev} of the Lax operator:
it is well known that the spectrum of a finite rank perturbation of an
operator with compact resolvent is described completely by the so
called perturbative determinant, a meromorphic function whose zeros are
the points of the spectrum of $L$. Furthermore, such a function admits
a product representation very similar to the product representation
enjoyed by the generating function of other integrable equations, like
KdV and NLS \cite{KAMeKDV,NLS}. So we compute explicitly such a function and we find that
it essentially coincides with the generating function constructed in
\cite{BHMP,campos}. 

As a byproduct of this result, we get that the generating function is
meromorphic, and is also a purely spectral object, so that one can use
it to obtain a {new proof of the involution of the other
  Hamiltonians of the
hierarchy, which makes the paper self-contained}. 

The proofs of our results proceed as follows. First we exhibit the Lax pair for the
KiPo equation and we verify that it actually enjoys the property of
being a Lax pair, then we compute the perturbative determinant that we
denote by $\dis(\kappa)=\dis(\kappa,u,v)$ and we
get an expression formally equivalent to that of the generating
function of \cite{BHMP}, and subsequently we show that for any value of $\kappa$
there exists a Lax pair for the flow of the Hamiltonian vector field
of $\dis(\kappa,u,v)$, with of course the same operator $L$ as for
the KiPo. Finally, we use this property to construct the operators $B^{(k)}$
which together with $L$ form the Lax pair for the Hamiltonian flow of
$H_k$. Finally we give the new proof of the involution of the $H_k$'s.

\medskip

\noindent{\it Declaration of generative AI use.}  During the
development of this work, the authors used OpenAI’s ChatGPT to explore
and refine some intermediate arguments. All mathematical statements
and proofs were independently verified by the authors, who take full
responsibility for the content.

\medskip

\noindent{\it Acknowledgments.}  DB was partially supported by
INdAM-GNFM. EH, SM and MP were partially supported by INdAM-GNAMPA.

\section{Main results}

\subsection{Preliminaries and Lax pair for the KiPo
  equation}\label{laxkipo} 
In this section we give the Lax pair for the KiPo equation. We
postpone the results on the whole hierarchy to Section \ref{hierarchy},
since they need more preliminaries.

Define the family of phase spaces contained in $\cL\times\cL$ of the
functions of class
	\begin{equation}\label{phase space}
		{\mathcal H}^s:= H^s(\cM,\R) \times H^{s-1}(\cM,\R)\,,\quad s\geq 1\,,
	\end{equation}
	endowed with the norm
	\begin{equation}\label{normuv}
		\|(u,v)\|_{\cH^s}^2:= \|u\|_{H^s}^2+  \|v\|_{H^{s-1}}^2\,,
	\end{equation}
        where
        $$
\|u\|_{H^s}^2:=\langle u,(-\Delta_g)^su\rangle\ .
        $$
We denote $D:= (-\Delta_g)^{\frac12}$ and set
	\begin{equation}
		u^{(k)}:=D^ku,\qquad v^{(k)}:=D^kv,\quad
		A:=\frac{1}{1+\norm{\du}^{2}},\quad
		\alpha:=\frac{A}{1+\sqrt A}.
		\label{eq:def-pq}
	\end{equation}
	We use the bra-ket notation denoting, for $g\in \mathcal L$,
	$\ket{g}\equiv g$ and $\bra{g} \in \mathcal L^*$ the linear continuous
	map $f\mapsto \ip{g}{f}$. 
	For vectors $f,g\in\cL$, we denote
	\begin{equation}
		\bsym{f}{g}:=\ketbra{f}{g}+\ketbra{g}{f},
		\qquad
		\anti{f}{g}:=\ketbra{f}{g}-\ketbra{g}{f}.
		\label{eq:products}
	\end{equation}
	Introduce, for $k\geq 2$, the real scalars 
	\begin{align}
		\label{(3)}
		\tq_k:=\ip{u^{(k-1)}}{v^{(k-1)}},\qquad
		\td_k:=\|u^{(k)}\|^2,\qquad
	\end{align}
	and the vector 
	\begin{equation}
		w:=\dv-\alpha\gscalar\du.
		\label{eq:w}
	\end{equation}
	We define two  operators
	\begin{equation}
		L:=D^{2}+M,
		\qquad
		B:=\alpha\anti{\dv}{\du},
		\label{eq:LB}
	\end{equation}
	where
	\begin{equation}
		M:=-\alpha\bsym{\du}{\duuu}
		+\frac{\alpha^{2}\escalar}{2}\bsym{\du}{\du}
		-\frac{A^{-1}}{2}\bsym{w}{w}.
		\label{eq:M}
	\end{equation}
    Note that $(u,v)\mapsto M(u,v)$ and $(u,v)\to B(u,v)$ are analytic
    maps from $\cH^3$ into the space of bounded linear operators on
    $\cL$. 

%
%
%
%
%

\begin{theorem}[\textbf{Lax Pair}]\label{Teomain}
Let $u\in H^3$. Then, the operators $(L,B)$ above form a
Lax pair for KiPo. More precisely if $t\mapsto u(t)$ is in
$C^2(\R,H^3)$ and $v(t):=u_t(t)$, define $L(t):=L(u(t),v(t))$ and
$B(t):=B(u(t),v(t))$. Then, if $u(t)$ fulfills KiPo \eqref{KiPo} one has
		\begin{equation}\label{lax0}
			\dot{L}=[B,L].
		\end{equation}
Vice versa, if $u(\cdot)\in C^2(\R,H^3)$ is such that $v:=u_t$ is not
identically zero, and \eqref{lax0} is fulfilled, then $u(t)$ fulfills
KiPo. 
	\end{theorem}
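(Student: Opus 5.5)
The proof is a direct computation: I will expand both $\dot L$ and $[B,L]$ as linear combinations of rank-one operators $\ketbra{f}{g}$, with $f,g$ taken from the finite family $\du,\dv,\duuu,D^3v$, and compare the coefficients.

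First I rewrite KiPo in the notation of \eqref{eq:def-pq}. Since $1-\langle u,\Delta_g u\rangle=1+\norm{\du}^2=A^{-1}$, the equation reads
\[
u_t=v,\qquad v_t=-A^2D^2u .
\]
From this I get the evolution of the building blocks:
\begin{itemize}
\item $\partial_t\du=\dv$ and $\partial_t\dv=-A^2\duuu$;
\item $\partial_t\duuu=D^3v$;
\item $\dot A=-2A^2\gscalar$;
\item $\dot\escalar=2\ip{\duu}{\dvv}$ and $\dot{\gscalar}=\norm{\dv}^2-A^2\escalar$.
\end{itemize}
The derivative $\dot\alpha$ follows by the chain rule from $\alpha=A/(1+\sqrt A)$.

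Since $D^2$ is time independent, $\dot L=\dot M$. The commutator splits as $[B,L]=[B,D^2]+[B,M]$, and the first piece is explicit:
\[
[B,D^2]=\alpha\bigl(\ketbra{\dv}{\duuu}+\ketbra{\duuu}{\dv}-\ketbra{\du}{D^3v}-\ketbra{D^3v}{\du}\bigr).
\]
The terms containing $D^3v$ should cancel exactly against the contribution $-\alpha\,\bsym{\du}{D^3v}$ coming from differentiating the first summand of $M$ in \eqref{eq:M}.

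All remaining terms lie in the span of the symmetric products of $\du,\dv,\duuu$. The commutator $[B,M]$ is computed with $[\ketbra{a}{b},\ketbra{c}{d}]=\ip{b}{c}\ketbra{a}{d}-\ip{d}{a}\ketbra{c}{b}$. Its coefficients involve the scalars $\norm{\du}^2$, $\gscalar$, $\norm{\dv}^2$, $\escalar$ and $\ip{\dv}{\duuu}$. The term $\ip{\duu}{\dvv}$ appearing in $\dot\escalar$ equals $\ip{\dv}{\duuu}$, so it belongs to the same list.

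I then match coefficients of $\bsym{\du}{\du}$, $\bsym{\du}{\dv}$, $\bsym{\dv}{\dv}$, $\bsym{\du}{\duuu}$ and $\bsym{\dv}{\duuu}$. It is convenient to keep $w$ grouped where possible. The algebraic identities that should close the computation are
\[
\alpha^2\norm{\du}^2=2\alpha-A,\qquad \dot\alpha=-\frac{\alpha^2}{\sqrt A}\,\gscalar .
\]
The first follows by writing $s=\sqrt A$, so that $\norm{\du}^2=(1-s^2)/s^2$ and $\alpha=s^2/(1+s)$. This identity is precisely what the special choice of $\alpha$ is designed for: it should make the $\bsym{\du}{\du}$ and $\bsym{\du}{\dv}$ coefficients vanish, and I expect it to be where the bookkeeping is most delicate. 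This coefficient matching is the main obstacle. The plan is to organize it by collecting first the coefficient of $\bsym{\dv}{\dv}$, which is the simplest, then $\bsym{\du}{\duuu}$, and only afterwards the $\du$–$\du$ and $\du$–$\dv$ terms, where the identities above enter.

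For the converse, I do not assume KiPo and set $v_t=-A^2D^2u+r$. I then redo the computation above keeping track of $r$. All terms not involving $r$ cancel by the direct part. Since $M$ depends on $v_t$ only through $w$, and $\dot\gscalar$, $\dot{\escalar}$ and $\dot A$ do not involve $v_t$ at all, the residual is
\[
\dot L-[B,L]=-A^{-1}\,\bsym{w}{Dr}+c\,\bsym{\du}{\du}
\]
for some scalar $c$ linear in $r$. If this vanishes at a time where $w\neq0$, then the rank of the symmetric operator forces $Dr$ to be parallel to $w$ and hence to vanish. Then $r=0$, since $D$ is injective on $\cL$.

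The remaining case is $w=0$, i.e.\ $\dv$ parallel to $\du$. I expect to handle it using that $v$ is not identically zero together with continuity: the set where $w\neq0$ should be dense in time, otherwise a short argument shows $v\equiv0$. By continuity $r=0$ then holds everywhere, which completes the converse.
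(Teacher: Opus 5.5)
\textbf{Direct part.} You take a different route from the paper. The paper uses the factorization $L=SL_1S$, with $S=I-\alpha\ketbra{u^{(1)}}{u^{(1)}}$ and $L_1=D^2-A^{-1}\ketbra{v^{(1)}}{v^{(1)}}$, together with formulas for $\dot S-BS$ and $\dot S+SB$; you expand everything in rank-one operators and match coefficients. Your route is viable, but one of the two identities you say will close it is wrong. From $\dot A=-2A^2\mathtt q_2$ and $\alpha=A/(1+\sqrt A)$, the chain rule gives $\dot\alpha=-\alpha(\alpha+A)\mathtt q_2=-\alpha^2(2+\sqrt A)\,\mathtt q_2$, not $-\alpha^2\mathtt q_2/\sqrt A$. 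This matters: the coefficient of $u^{(1)}\vee u^{(3)}$ in $\dot L-[B,L]$ is $-\dot\alpha-A\alpha\mathtt q_2-\alpha^2\mathtt q_2$. The three contributions come from differentiating $-\alpha\,u^{(1)}\vee u^{(3)}$, from $-A^{-1}w\vee\dot w$ with $\dot w\ni -A^2u^{(3)}$, and from $-[B,M]$. With your value of $\dot\alpha$ this coefficient does not vanish. Your other identity, $\alpha^2\|u^{(1)}\|^2=2\alpha-A$, is correct; it is what cancels the $v^{(1)}\vee u^{(3)}$ coefficient.

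\textbf{Converse: the residual.} The converse has the right shape, but its key claims are incorrect. First, $\dot{\mathtt q}_2$ does involve $v_t$. With $v_t=-A^2D^2u+r$ one has $\dot{\mathtt q}_2=\|v^{(1)}\|^2-A^2\mathtt d_2+\langle u^{(1)},Dr\rangle$. Since $\mathtt q_2$ enters $w$, the $r$-part of $\dot w$ is $Dr-\alpha\langle u^{(1)},Dr\rangle u^{(1)}=SDr$. The residual is therefore exactly $-A^{-1}\,w\vee(SDr)$, which is the paper's \eqref{carina1} because $w=Sv^{(1)}$. It is not of the form $-A^{-1}w\vee Dr+c\,u^{(1)}\vee u^{(1)}$: the correction is a multiple of $w\vee u^{(1)}$. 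With the correct form, $w\neq0$ gives $SDr=0$, hence $r=0$, at once.

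\textbf{Converse: the case $w=0$.} Since $w=Sv^{(1)}$ and $S$ is invertible ($Su^{(1)}=\sqrt A\,u^{(1)}$ and $S=I$ on $(u^{(1)})^\perp$), $w=0$ if and only if $v=0$, not if and only if $v^{(1)}\parallel u^{(1)}$. Under your characterization the plan would break. For any curve $u(t)=a(t)e$ with $e$ an eigenfunction of $-\Delta_g$, one has $v^{(1)}\parallel u^{(1)}$ for all $t$, so your set $\{w\neq0\}$ would be empty and the argument would give nothing.

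With the correct characterization, the pointwise information is only that $v(t)=0$ or $r(t)=0$. What remains is to exclude curves that follow KiPo on one interval and rest at a non-equilibrium point on another. On such a rest interval $B=0$ and $\dot L=0$, so the Lax equation holds trivially. Your ``short argument'' is precisely this missing content, and the proposal does not give it.

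The paper supplies it as follows. Take a maximal interval on which $r\equiv0$ and $v\not\equiv0$, and let $t_0$ be an endpoint. There are $t_n\to t_0$ with $r(t_n)\neq0$, hence $v(t_n)=0$, so $v(t_0)=\dot v(t_0)=0$. Then $r(t_0)=0$ gives $u(t_0)=0$, and uniqueness for KiPo forces $u\equiv0$ on the interval, a contradiction.

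An alternative uses the energy. One has $\frac{d}{dt}H=\langle v,r\rangle\equiv0$ along the curve. At an endpoint of a maximal rest interval the same limiting argument gives $u=v=0$. Hence $H\equiv-\frac12$, which is the minimum of $H$ and is attained only at the origin, so $v\equiv0$.
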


A standard consequence is that the spectrum of $L$ (which in our case
is pure point) is invariant under the dynamics of KiPo, so our next
step consists in studying the spectrum of $L$. 
Denote by $\sigma(-\Delta_g)$ the spectrum of $-\dg$. For $\nu\in\sigma(-\Delta_g)$ denote by $V_\nu\subset\cL$
the eigenspace of $-\dg$ corresponding to the eigenvalue $\nu$ and by $n_\nu$ its
dimension, namely the multiplicity of $\nu$. 

It is well known that, since $L$ is a rank 3 perturbation of $-\dg$,
each eigenvalue  $\nu\in\sigma(-\dg)$ is also an eigenvalue of $L$
with multiplicity at least $n_\nu-3$. Actually, we have the following
proposition which will be proved at the end of Sect. \ref{Lax},

\begin{proposition}
  \label{solodue}
Let $\nu\in\sigma(-\dg)$, then $\nu$ is also an
eigenvalue of $L$ with multiplicity at least $(n_\nu-2)_+$, the
positive part of $n_\nu-2$. 
\end{proposition}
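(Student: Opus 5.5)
The idea is to exhibit explicitly a subspace of $V_\nu$ of dimension at least $n_\nu-2$ on which $M$ vanishes, so that $L$ acts there as $D^2=-\dg$, i.e. as multiplication by $\nu$.

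First I record the structure of $M$ in \eqref{eq:M}. Every term is a combination of the rank-one operators $\ketbra{f}{g}$ with $f,g\in\{\du,\duuu,w\}$. Hence for any $\phi\in\cL$, $M\phi$ is a linear combination of $\du,\duuu,w$ whose coefficients are the scalar products $\ip{\du}{\phi}$, $\ip{\duuu}{\phi}$, $\ip{w}{\phi}$. Since $w=\dv-\alpha\gscalar\du$, it follows that if
$$
\ip{\du}{\phi}=\ip{\duuu}{\phi}=\ip{\dv}{\phi}=0,
$$
then $M\phi=0$.

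The key observation is that on $V_\nu$ one of these three conditions is redundant. If $\phi\in V_\nu$, then $D^2\phi=\nu\phi$. Since $D$ is self-adjoint and $\duuu=D^2\du$,
$$
\ip{\duuu}{\phi}=\ip{D^2\du}{\phi}=\ip{\du}{D^2\phi}=\nu\ip{\du}{\phi}.
$$
Thus, for $\phi\in V_\nu$, the condition $\phi\perp\du$ already implies $\phi\perp\duuu$. Here $u\in H^3$ guarantees that $\duuu\in\cL$, and $\phi$ is smooth, so all the pairings make sense.

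Now define
$$
W_\nu:=\{\phi\in V_\nu:\ \ip{\du}{\phi}=\ip{\dv}{\phi}=0\}.
$$
This is the kernel of a linear map $V_\nu\to\R^2$, so $\dim W_\nu\ge n_\nu-2$, and trivially $\dim W_\nu\ge0$, which gives the positive part. For $\phi\in W_\nu$ the previous two steps give $M\phi=0$, and therefore $L\phi=D^2\phi+M\phi=\nu\phi$. So $W_\nu$ is contained in the eigenspace of $L$ for the eigenvalue $\nu$, and the multiplicity of $\nu$ as an eigenvalue of $L$ is at least $(n_\nu-2)_+$.

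There is no real obstacle. The only point needing care is the reduction from the naive count of three constraints to two, which is the identity $\ip{\duuu}{\phi}=\nu\ip{\du}{\phi}$ on $V_\nu$ together with the fact that $w$ lies in the span of $\dv$ and $\du$.
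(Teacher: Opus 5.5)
Your proof is correct and follows essentially the same route as the paper: both of you show that on $V_\nu$ one has $\langle u^{(3)},\phi\rangle=\nu\langle u^{(1)},\phi\rangle$, so the three conditions forcing $M\phi=0$ reduce to the two conditions $\phi\perp u^{(1)}$, $\phi\perp v^{(1)}$ (equivalently $\phi\perp u$, $\phi\perp v$), and then count dimensions. You use only the implication ``these conditions imply $M\phi=0$'', which is all that is needed and is slightly more careful than the paper's ``if and only if''.
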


Thus, at most 2 eigenvalues leave $\sigma(-\dg)$ for each
$\nu$.  This makes apparent that {if $n_\nu\geq 3$ for at least
  one value of $\nu$, then} {\it the eigenvalues of $L$ cannot form a
complete set of integrals of motion for KiPo}.  Indeed $n_\nu$ is the
number of degrees of freedom corresponding to a datum $(u,v)\in
V_\nu\times V_\nu$, {while the independent integrals of motion are at
most 2}. We leave the investigation of the possible actual
integrability of KiPo on general manifold to future work.

\subsection{Generating function, KiPo
  Hierarchy and perturbative determinant}\label{hierarchy} 

In this subsection we recall a few facts on the KiPo hierarchy. First
of all we need to introduce some further notation, for $k\geq 2$:
\begin{equation}\label{BDQ}
	\begin{aligned}
			\tb_k:=\|v^{(k-1)}\|^2\quad\quad&
			E_k:=\dfrac{\tb_k}{A}+A \td_k\quad\quad&
		F_k:=\dfrac{\tb_k}{A}-A \td_k.
	\end{aligned}
\end{equation}
It was proved in \cite{campos,BHMP} that, for
$k\geq2$, the following quantities are integral of motion in
involution for the KiPo
flow
	\begin{equation}\label{def I}
		H_k=\frac12 ( E_k- \sum_{j=2}^k \tq_j \tq_{k+2-j} + \frac14
                \sum_{j=2}^{k-1}(E_j E_{k+1-j}-F_j F_{k+1-j})). 
	\end{equation}
the proof is recursive
and immediately extends to the case of general manifolds. {Here we
  do not use this property, instead we give a new proof of it.}

We remark that explicitly one has
\begin{equation}
  \label{esplicito}
H_k={\frac{1}{2}}\int_{\cM}\left(\frac{1}{A}v(x)(-\dg)^{k-1}v(x)+A
u(x)(-\dg)^{k}u(x)
\right)dx +l.o.t.
  \end{equation}
where l.o.t.\ means terms containing less derivatives, so that the
corresponding Hamilton equation has the form
\begin{equation}
  \label{KiPok}
u_{tt}-\dg^{2k-1}u+l.o.t.=0.
\end{equation}
%
%

\medskip
The next result gives the Lax pair for the Hamiltonian flow of $H_k$. 

\begin{theorem}\label{BKAPPA}
  For any $k\geq 2$, define the skew-adjoint
  operators
  	\begin{equation}\label{Bm}
  	\begin{aligned}
  		B^{(k)}(u,v)&=\alpha  \td_{k}  \, u^{(1)}\wedge v^{(1)}\\
  		&-\frac{1}{1+\sqrt A}\,u^{(1)}\wedge v^{(2k-1)}-\sum_{i=1}^{k-2}u^{(2i+1)}\wedge v^{(2(k-i)-1)}\,,
  	\end{aligned}
  \end{equation}
Then, if $t\mapsto (u(t),\,v(t))$ is a sufficiently smooth solution of
the equations of motion of $H_k$ and $L(t):=L(u(t),v(t))$,
$B^{(k)}(t):=B^{(k)}(u(t),v(t))$, one has
	\begin{equation}\label{bikappa}
		\dot{L}=[B^{(k)},L]\,.
	\end{equation}
\end{theorem}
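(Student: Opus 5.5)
The intro announces a route: compute the perturbative determinant $\dis(\kappa)$ of $L$, identify it with the generating function of \cite{BHMP,campos}, build a Lax pair $(L,B_\kappa)$ for the flow of $\dis(\kappa,\cdot)$, and obtain $B^{(k)}$ by expanding in $\kappa$. A direct verification is also possible, but I will follow the $\kappa$-route since it organizes the computation.

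\textbf{Step 1: the perturbative determinant.} Write $L=D^2+M$ with $M$ of rank $3$, supported on $\mathrm{span}\{\du,\duuu,\dv\}$, where $w\in\mathrm{span}\{\du,\dv\}$. Set
\[
\dis(\kappa)=\det\bigl(I+M(D^2-\kappa)^{-1}\bigr).
\]
This reduces to a $3\times 3$ determinant whose entries are resolvent matrix elements $\ip{f}{(D^2-\kappa)^{-1}g}$ with $f,g\in\{\du,\duuu,\dv\}$. Expanding $(D^2-\kappa)^{-1}=-\sum_{j\ge0}\kappa^{-j-1}D^{2j}$ for large $|\kappa|$ turns these entries into the scalars $\td_k,\tq_k,\tb_k$. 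Comparing the coefficients with \eqref{def I}, I expect
\[
\dis(\kappa)=1+\sum_{k\ge 2}c\,\kappa^{-k}H_k
\]
up to an explicit normalization, possibly after composing with a fixed function of a Casimir-like quantity.

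\textbf{Step 2: a Lax pair for $\dis(\kappa)$.} For fixed $\kappa$, consider the Hamiltonian flow of $\dis(\kappa)$. Its vector field is obtained from the gradients of the resolvent matrix elements, for example
\[
\partial_u\ip{\du}{(D^2-\kappa)^{-1}\du}=2D(D^2-\kappa)^{-1}\du .
\]
I would seek $B_\kappa$ of the form $\sum a_{ij}(\kappa)\, f_i\wedge (D^2-\kappa)^{-1}g_j$ and verify $\dot L=[B_\kappa,L]$ directly. Here $\dot L=\dot M$ is a finite sum of symmetric products $\bsym{\cdot}{\cdot}$. In $[B_\kappa,D^2+M]$, the term $[f\wedge g,D^2]$ produces $D^2 g$ against $g$-type products, and the resolvent identity
\[
D^2(D^2-\kappa)^{-1}=I+\kappa(D^2-\kappa)^{-1}
\]
closes the algebra. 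The bilinear identity one must check is that the coefficients match. The inverse implication from Theorem \ref{Teomain} is not needed.

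\textbf{Step 3: expansion in $\kappa$.} Since $X_{H_k}$ is linear in $H_k$, and the $\kappa^{-k}$ coefficient of $X_{\dis}$ is $c\,X_{H_k}$ (plus lower $H_j$ contributions if the normalization is nonlinear), the $\kappa^{-k}$ coefficient of $B_\kappa$ gives $B^{(k)}$. Expanding $(D^2-\kappa)^{-1}$ yields sums of $u^{(2i+1)}\wedge v^{(2(k-i)-1)}$, matching the telescoping sum in \eqref{Bm}. The boundary terms, coming from the $\alpha$, $A$ prefactors and from the $w$ correction, give $\alpha\td_k\,\du\wedge\dv$ and $-(1+\sqrt A)^{-1}\du\wedge v^{(2k-1)}$. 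If the coefficient extraction becomes messy, the fallback is to verify \eqref{bikappa} directly: compute $X_{H_k}$ from \eqref{def I}, where $\dot u$ and $\dot v$ are linear combinations of $D^{2j}u$ and $D^{2j}v$ with scalar coefficients; differentiate $M$; and match against $[B^{(k)},L]$ term by term, using $[f\wedge g,D^2]=D^2g\otimes\ldots$ identities and $\dot\alpha,\dot A$ expressed through $\dot\td_2$.

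\textbf{Main obstacle.} The hardest part is the scalar bookkeeping: the $\alpha,\sqrt A$ algebra, in particular $\alpha(1+\sqrt A)=A$ and $\alpha^2\td_2=(1-\sqrt A)^2\cdot\ldots$, together with the quadratic sums in $H_k$. The convolution sums $\sum\tq_j\tq_{k+2-j}$ and $\sum(E_jE_{k+1-j}-F_jF_{k+1-j})$ must reproduce exactly the cross terms of $[\,\sum u^{(2i+1)}\wedge v^{(\cdot)},\,M]$. I would first check the cases $k=2$ and $k=3$ explicitly to fix the constants before doing the general induction.
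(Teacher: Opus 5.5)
Your architecture is the paper's: compute $\Delta(\kappa)=\det(I+MR_\kappa)$, identify it with $1+J(\kappa^{-1})$, find a Lax operator for the Hamiltonian flow of $\Delta(\kappa)$, and read off $B^{(k)}$ from the expansion at $\kappa=\infty$.

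\textbf{The gap is Step 2.} This step carries essentially all the content, and you leave it as an ansatz that cannot work. With $f_i,g_j\in\{u^{(1)},u^{(3)},v^{(1)}\}$, each term $f_i\wedge R_\kappa g_j$ expands at $\kappa=\infty$ into wedges $f_i\wedge D^{2m}g_j$, one of whose factors has fixed low order. So it can never generate the convolution $\sum_i u^{(2i+1)}\wedge v^{(2(k-i)-1)}$ that you claim to recover in Step 3, e.g.\ the term $u^{(5)}\wedge v^{(3)}$ of $B^{(4)}$.

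This is not a gauge ambiguity. At quadratic order in $(u,v)$ the Lax equation reads $dM^{(2)}[X^{(1)}]=[B^{(2)},D^2]$, which fixes every matrix element of $B^{(2)}$ between distinct eigenspaces of $D^2$.

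A convolution is the signature of a product of two Neumann series. Indeed, the operator in Lemma~\ref{thm:A} is
\[
\tilde B(\kappa)=2\alpha U\,u^{(1)}\wedge v^{(1)}-2\kappa\beta\,u^{(1)}\wedge R_\kappa v^{(1)}-2\kappa^2\,R_\kappa u^{(1)}\wedge R_\kappa v^{(1)},\qquad \beta=\frac{\sqrt A}{1+\sqrt A},
\]
and its last term carries a resolvent on each factor. This is compatible with $dL[(X_u,X_v)]$ containing only single resolvents, because $[\ketbra{R_\kappa f}{R_\kappa g},D^2]=\ketbra{R_\kappa f}{g}-\ketbra{f}{R_\kappa g}$.

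The paper also avoids checking the identity on $D^2+M$ term by term. It writes $L=SL_1S$ with $S=I-\alpha\ketbra{u^{(1)}}{u^{(1)}}$ and $L_1=D^2-A^{-1}\ketbra{v^{(1)}}{v^{(1)}}$. It then proves $dS[X_u]=\tilde B S-SB_0$ and $dL_1[(X_u,X_v)]=B_0L_1+L_1B_0^{*}$ for an auxiliary non-skew operator $B_0$, after which the $B_0$-terms cancel. The same factorization, $L-\kappa=S(L_1-\kappa S^{-2})S$, reduces $\Delta$ to the $2\times 2$ determinant $UV-\kappa Q^2$. This is what tames the scalar bookkeeping you identify as the main obstacle.

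\textbf{Three smaller points.}

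First, the Neumann series $R_\kappa=-\sum_j\kappa^{-j-1}D^{2j}$ is only formal on $\cL$, since $D^2$ is unbounded. The paper restricts to the invariant finite-dimensional subspaces $V_{S_N}\times V_{S_N}$, where everything is rational in $\kappa$ and holomorphic at $\kappa=\infty$. It matches coefficients there and then lets $N\to\infty$ using analyticity in $(u,v)$.

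Second, the normalization is exactly linear: $\Delta(\kappa)=1+2\sum_{k\ge2}\kappa^{1-k}H_k$. Hence $B^{(k)}$ is one half of the $\kappa^{1-k}$-coefficient of $\tilde B(\kappa)$; the exponent is $1-k$, not $-k$, and there is no composition with a Casimir.

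Third, when you extract this coefficient, the double-resolvent term gives $-2\sum_{i=0}^{k-1}u^{(2i+1)}\wedge v^{(2(k-i)-1)}$.
\begin{itemize}
\item The $i=0$ term combines with $2\beta\,u^{(1)}\wedge v^{(2k-1)}$ into $-\frac{2}{1+\sqrt A}\,u^{(1)}\wedge v^{(2k-1)}$.
\item The $i=k-1$ term $-2\,u^{(2k-1)}\wedge v^{(1)}$ survives, so the sum in \eqref{Bm} should run to $k-1$.
\end{itemize}
A direct quadratic-order check at $k=2$ confirms this. Take $u\in V_\nu$, $v\in V_\mu$ with $\nu\neq\mu$. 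Then, with the printed $B^{(2)}$, one finds $\dot L-[B^{(2)},L]=\nu^{3/2}\mu^{1/2}(\nu-\mu)\,u\vee v=[-u^{(3)}\wedge v^{(1)},D^2]$ at that order. So neither your claim that the expansion matches \eqref{Bm} as printed, nor your direct-verification fallback, would go through without this correction.
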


Finally  in \cite{BHMP} we defined the formal power series
\begin{equation}
	\label{generetrice}
	J(\lambda):=2\sum_{k\geq 2} \lambda^{k-1} H_k\ ,
\end{equation}
which was used as a tool for {constructing the constants
  of motion with negative index $k$}.
 Actually a slightly more accurate analysis shows that there exists a
 function, which we still denote by $J$, that is well defined for
 $(u,v)\in \cH^2$, is analytic  for $\lambda$ in a domain containing
$\C\setminus[0,1]$, and coincides with $J$ at the level of formal power series.
Notably, the KiPo Hamiltonian
\eqref{hamiltonian1int} is recovered as
$$
H=-\frac{1}{2}\left(\lim_{\lambda\to\infty}J(\lambda)+1\right).
$$
{As shown in Section 4.1 of \cite{BHMP} the generating function
  can also be expressed through the functions\footnote{which however in \cite{BHMP}
  were denoted with the names $\tilde D$, $\tilde B$, $\tilde Q$}
\begin{align}
	\label{varie}
	U(\kappa)&:=A^{-1}-\langle u^{(2)};R_\kappa u^{(2)} \rangle\ ,\quad
	V(\kappa):=A-\langle \dv;R_\kappa\dv\rangle\ ,\quad
	\\
	Q(\kappa)&:=\langle \du;R_\kappa\dv\rangle\ .\quad 
\end{align}
 Indeed one has
	\[
	U(\kappa)V(\kappa)-\kappa Q^2(\kappa)= {J}(\kappa^{-1})+1.
	\]
}

As anticipated in the introduction, it  turns out that the
generating function essentially coincide with the perturbative
determinant of $L$ (see Appendix \ref{kato} for a review of the
definition and the properties of the pertrubative determinant).  To
state the result in a precise way, consider the resolvent of the
Laplacian, namely $R_\kappa:=(-\dg-\kappa)^{-1}$ and define the
perturbative determinant\footnote{as recalled in the appendix, the
determinant is defined as the determinant of the matrix obtained by
restricting $I+MR_\kappa$ to any subspace containing the range of $M$
(see \cite{kato}).  }
\begin{equation}
  \label{Pert.det}
\dis(\kappa):=\det(I+MR_\kappa).
\end{equation}

Denote by
$$
\lambda_1\leq\lambda_2\leq...\leq \lambda_n\leq...
$$
the sequence of the eigenvalues of $L$ and by 
$$
\nu_1\leq\nu_2\leq...\leq \nu_n\leq...
$$
the sequence of the eigenvalues of $-\dg$. The following theorem holds
\begin{theorem}
  \label{perturbative}
Consider the perturbative determinant \eqref{Pert.det} of the KiPo Lax
operator, then one has
\begin{equation}
  \label{funz.gen}
\dis(\kappa)={U(\kappa)V(\kappa)-\kappa Q^2(\kappa)=}1+{ J}\left(\frac{1}{\kappa}\right),
\end{equation}
where ${J}\left(\lambda\right)$ is the generating
function \eqref{generetrice}. Furthermore,
the following formula holds
\begin{equation}
  \label{prodotto}
\dis(\kappa)=\prod_{j\geq1}\frac{\lambda_j-\kappa}{\nu_j-\kappa}.
\end{equation}
\end{theorem}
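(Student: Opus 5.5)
We will compute $\dis(\kappa)$ directly as a finite determinant and then compare with $U V-\kappa Q^2$; the identity with $1+J(1/\kappa)$ then follows from the relation $U(\kappa)V(\kappa)-\kappa Q^2(\kappa)=J(\kappa^{-1})+1$ recalled from \cite{BHMP}.

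\textbf{Reduction to a $3\times 3$ determinant.} Since $M$ has range contained in $W:=\mathrm{span}\{\du,\duuu,w\}$, we write $M=\sum_{i,j}\ket{e_i}c_{ij}\bra{e_j}$ with $e=(\du,\duuu,w)$ and $c$ the symmetric coefficient matrix read off from \eqref{eq:M}. By Sylvester's identity $\det(I+XY)=\det(I+YX)$, we get
$$\dis(\kappa)=\det\big(I_3+c\,G(\kappa)\big),\qquad G_{ij}(\kappa)=\ip{e_i}{R_\kappa e_j}.$$
The entries of $G$ are expressed through $\ip{\du}{R_\kappa\du}$, $\ip{\du}{R_\kappa\duuu}$, $\ip{\duuu}{R_\kappa\duuu}$, $\ip{\du}{R_\kappa\dv}$, $\ip{\duuu}{R_\kappa\dv}$ and $\ip{\dv}{R_\kappa\dv}$. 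We then use $D^2R_\kappa=I+\kappa R_\kappa$ to rewrite everything in terms of $a:=\ip{\du}{R_\kappa\du}$, $Q$, $\ip{\dv}{R_\kappa\dv}$, $\ip{u^{(2)}}{R_\kappa u^{(2)}}$ and the scalars $\td_2,\gscalar,\td_3$. For example, $\ip{\du}{R_\kappa\duuu}=\ip{u^{(2)}}{R_\kappa u^{(2)}}$, and $\ip{\duuu}{R_\kappa\duuu}=\td_3+\kappa\td_2+\kappa^2 \ip{\du}{R_\kappa\du}$ after two applications, with the analogous reductions for $\ip{\duuu}{R_\kappa \dv}$. A convenient simplification is to replace the basis $(\du,\duuu,w)$ by $(\du,\duuu,\dv)$, since $w$ is a combination of $\du$ and $\dv$.

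\textbf{The algebraic identity (main obstacle).} Expanding the $3\times3$ determinant and simplifying with $A^{-1}=1+\td_2$ and the defining relation of $\alpha$ should give $UV-\kappa Q^2$. The useful identity is $(1-\alpha)^2 A^{-1}\cdot A=\dots$, which follows from $\alpha(1+\sqrt A)=A$, i.e.\ $\sqrt A-\alpha=\alpha\sqrt A$, equivalently $(1-\alpha)^2=A^{-1}\alpha^2\cdot\ldots$. This is the step where the $\kappa$-dependent terms (notably $\kappa^2 a$ and $\td_3$) must cancel exactly. We expect this bookkeeping to be the hard part. As a first step we will check the cancellation at order $\kappa^{-1}$ via the large-$\kappa$ expansion, namely $\dis\to1$ and the first coefficient equal to $2H_2$. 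Then we will do the full expansion by organizing the determinant as a polynomial in $a$, whose quadratic and cubic terms must vanish, using that $c$ has rank-deficient structure in the $\du$ direction.

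\textbf{Product formula.} By Appendix \ref{kato}, $\dis$ is meromorphic, with zeros at the eigenvalues of $L$ and poles at those of $-\dg$, counted with multiplicity. The candidate product $\prod_j(\lambda_j-\kappa)/(\nu_j-\kappa)$ converges. Indeed, by min-max with the rank-3 perturbation $M$ bounded, $\nu_{j-3}\le\lambda_j\le\nu_{j+3}$ up to the bounded shift $\|M\|$, and interlacing/trace arguments give $\sum_j|\lambda_j-\nu_j|<\infty$, since $\sum_j(\lambda_j-\nu_j)$ equals the trace of $M$, finite rank. The ratio of $\dis$ and the product is then entire and zero-free. Along a ray $\kappa=-t\to-\infty$ both sides tend to $1$, since $\|MR_\kappa\|\to0$ and the product tends to $1$ by dominated convergence. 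Growth bounds away from the spectrum (for instance via $\log\det$ and $|\det(I+T)|\le e^{\|T\|_1}$ with $\|MR_\kappa\|_1\lesssim \mathrm{dist}(\kappa,\sigma)^{-1}$), together with Phragmén–Lindelöf/Liouville, force the ratio to be identically $1$. Alternatively, we could invoke the standard formula $\frac{d}{d\kappa}\log\dis=\Tr\big((L-\kappa)^{-1}-R_\kappa\big)$ and integrate from $-\infty$.
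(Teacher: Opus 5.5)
There is a genuine gap in the first half of the theorem.

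\textbf{The identity $\dis=UV-\kappa Q^2$ is not proved.} Your reduction via Sylvester's identity to $\dis(\kappa)=\det\bigl(I_3+c\,G(\kappa)\bigr)$ is legitimate. But the whole content of this part is the identity $\det(I_3+cG)=UV-\kappa Q^2$, and you do not prove it. You say the expansion ``should give'' it and defer it as bookkeeping. The only thing you offer is a check at order $\kappa^{-1}$, which is a consistency test, not a proof.

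Moreover, the ingredients you list for that bookkeeping are partly wrong:
\begin{itemize}
\item From $\alpha(1+\sqrt A)=A$ one gets $\sqrt A-\alpha=\alpha/\sqrt A$, not $\alpha\sqrt A$; the latter holds only if $A=1$. The identities actually needed are $1-\alpha\norm{u^{(1)}}^2=\sqrt A$ and $\alpha\bigl(2-\alpha\norm{u^{(1)}}^2\bigr)=A$.
\item Your indices are shifted. In the paper $\mathtt d_2=\norm{u^{(2)}}^2$, so $A^{-1}=1+\norm{u^{(1)}}^2$, not $1+\mathtt d_2$. Likewise $\ip{u^{(3)}}{R_\kappa u^{(3)}}=\mathtt d_2+\kappa\norm{u^{(1)}}^2+\kappa^2a$, with $a=\ip{u^{(1)}}{R_\kappa u^{(1)}}$. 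Reading the coefficient $\alpha^2\mathtt d_2/2$ of \eqref{eq:M} with your convention gives the wrong $c_{11}$. In a computation whose only point is exact cancellation, that is fatal.
\item The vanishing of the $a^2,a^3$ terms is not the difficulty, and it has nothing to do with the structure of $c$. Since $R_\kappa u^{(3)}=u^{(1)}+\kappa R_\kappa u^{(1)}$, the $a$-dependence of $G$ is a rank-one matrix, so $\det(I_3+cG)$ is affine in $a$ for any $c$.
\end{itemize}
The real work, matching all the remaining coefficients, is exactly what is missing.

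\textbf{The idea that makes this short.} The paper uses the factorization $L=SL_1S$, with $S=\Id-\alpha\ketbra{u^{(1)}}{u^{(1)}}$ and $L_1=D^2-A^{-1}\ketbra{v^{(1)}}{v^{(1)}}$. Since $S^2=\Id-A\ketbra{u^{(1)}}{u^{(1)}}$, one has $S^{-2}=\Id+\ketbra{u^{(1)}}{u^{(1)}}$. Hence $L-\kappa=S(L_1-\kappa S^{-2})S$ and
\[
\Id+MR_\kappa=S\,\bigl[(L_1-\kappa S^{-2})R_\kappa\bigr]\,\bigl[R_\kappa^{-1}SR_\kappa\bigr].
\]
The two outer factors are rank-one perturbations of $\Id$, each with determinant $1-\alpha\norm{u^{(1)}}^2=\sqrt A$. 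The middle factor is $\Id-\kappa\ketbra{u^{(1)}}{u^{(1)}}R_\kappa-A^{-1}\ketbra{v^{(1)}}{v^{(1)}}R_\kappa$. Its $2\times2$ determinant is $(1-\kappa a)\bigl(1-A^{-1}\ip{v^{(1)}}{R_\kappa v^{(1)}}\bigr)-\kappa A^{-1}Q^2=A^{-1}(UV-\kappa Q^2)$, by \eqref{identitaU}. Multiplying the three determinants gives $UV-\kappa Q^2$.

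\textbf{The product formula.} The paper simply invokes Krein's spectral shift function, $\ln\dis(\kappa)=\int\xi(t)(t-\kappa)^{-1}\,dt$ with $\xi=N_{-\Delta_g}-N_L$. Your trace route is the differentiated form of this and is fine, except for the sign: $\frac{d}{d\kappa}\log\dis=\Tr\bigl(R_\kappa-(L-\kappa)^{-1}\bigr)$. Your Phragm\'en--Lindel\"of route is only sketched: you would also need lower bounds for the product on contours crossing the positive half-line, where the spectrum accumulates. Finally, $\sum_j|\lambda_j-\nu_j|<\infty$ follows neither from interlacing nor from the signed identity $\sum_j(\lambda_j-\nu_j)=\Tr M$. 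The right tool is the Ky Fan/Lidskii bound $\sum_j|\lambda_j-\nu_j|\le\|M\|_1$.
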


We now introduce the sequence of the relevant eigenvalues of $L$, which
is useful in order to take into account the result of Proposition
\ref{solodue}. Indeed, the sequence of the eigenvalues of
$L$ contains also each eigenvalue $\nu$ of $-\dg$ with multiplicity at least
equal to the positive part
$(n_\nu-2)_+$ of $n_\nu-2$.

\begin{definition}
  \label{relevant}
Consider the sequence of the eigenvalues of $L$ repeated according to
their multiplicity and erase from it $(n_\nu-2)_+$ copies of each eigenvalue
$\nu\in\sigma(-\dg)$. We call the so obtained sequence {relevant part
  of the spectrum of $L$} and we denote it by $\sigma'(L)$. Its points
will be called {\it relevant eigenvalues} of $L$.
\end{definition}
We label the relevant eigenvalues $\lambda'_n$ in increasing order (with
repetitions), so that they fulfill
\begin{equation}
  \label{relevant.3}
\lambda'_1\leq\lambda'_2\leq...\leq\lambda'_n\leq... \ .
\end{equation}

We do a similar excision also from the spectrum of $-\dg$.
\begin{definition}
  \label{relevant.1}
Consider the sequence of the eigenvalues of $-\dg$ repeated according to
their multiplicity and erase from it $(n_\nu-2)_+$ copies of 
each $\nu\in\sigma(-\dg)$. We call the so obtained sequence {relevant part
  of the spectrum of $-\dg$} and we denote it by $\sigma'(-\dg)$. Its points
will be called {\it relevant eigenvalues} of $L$ and denoted by $\nu' $.
\end{definition}
As in Eq. \eqref{relevant.3} we label the relevant eigenvalues
$\nu'_n$ of
$-\dg$ in increasing order:
\begin{equation}
  \label{relevant.3.1}
\nu'_1\leq\nu'_2\leq...\leq\nu'_n\leq... \ .
\end{equation}

With these definitions we have the following
\begin{corollary}
One has
    \begin{equation}
\label{rilevanti}
      \dis(\kappa)=\prod_{n\geq1}\frac{\lambda_{n}'-\kappa}{\nu'_n-\kappa}.
		\end{equation}
\end{corollary}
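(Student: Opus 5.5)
The plan is to derive the corollary directly from the product formula \eqref{prodotto} of Theorem \ref{perturbative}, combined with Proposition \ref{solodue}. Everything reduces to cancelling common factors between numerator and denominator of the infinite product, which must be done with care because the product is only conditionally meaningful.

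\textbf{Step 1: cancellation.} By Proposition \ref{solodue}, for each $\nu\in\sigma(-\dg)$ the value $\nu$ occurs in the sequence $(\lambda_j)$ at least $(n_\nu-2)_+$ times. It also occurs exactly $n_\nu\ge (n_\nu-2)_+$ times in $(\nu_j)$. Hence for each $\nu$ we may pair $(n_\nu-2)_+$ copies of the factor $(\nu-\kappa)$ in the numerator with the same number of copies in the denominator. Each such pair contributes a factor $1$. After removing them, the remaining numerator terms are exactly the $\lambda'_n$ and the remaining denominator terms are exactly the $\nu'_n$, by Definitions \ref{relevant} and \ref{relevant.1}. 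Formally this gives \eqref{rilevanti}.

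\textbf{Step 2: justifying the rearrangement.} The product \eqref{prodotto} is taken in the ordered pairing $j\mapsto(\lambda_j,\nu_j)$, while \eqref{rilevanti} uses the pairing $n\mapsto(\lambda'_n,\nu'_n)$. I therefore need to know that both products converge and that reindexing does not change the value.

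Since $L-D^2=M$ has rank $3$, the min-max principle gives interlacing: $\nu_{j-3}\le\lambda_j\le\nu_{j+3}$ (and in fact $\nu_{j-2}\le\lambda_j\le\nu_{j+1}$ from the sign structure of $M$, though the crude bound suffices). The same interlacing holds for the primed sequences with a bounded shift. This holds because removing the same number of copies of each $\nu$ from both sequences shifts indices by at most a uniformly bounded amount locally: at most $2$ extra eigenvalues leave each cluster, and only $3$ can move in total.

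I would then argue via finite truncations. Take $\Lambda\to\infty$ along a sequence of values lying in gaps of $\sigma(-\dg)$ that are far from all the $\lambda_j$; such gaps exist because only finitely many $\lambda_j$ lie in any window near $\Lambda$. Compare the partial products over $\{j:\nu_j<\Lambda\}$ with those over $\{n:\nu'_n<\Lambda\}$. For these $\Lambda$, the two partial products differ only by cancelled factors equal to $1$, plus $O(1)$ boundary factors of the form $(\lambda-\kappa)/(\nu-\kappa)$ with $\lambda,\nu$ both near $\Lambda$. These boundary factors tend to $1$ as $\Lambda\to\infty$ for fixed $\kappa$.

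\textbf{Main obstacle.} The delicate point is the boundary-term control, that is, ensuring $|\lambda-\nu|/|\nu-\kappa|\to0$ near the cutoff. One could instead control it via Weyl asymptotics, but I would try to avoid that. Instead I would rely on the fact that \eqref{prodotto} already converges, as proved in Theorem \ref{perturbative}, and that in a convergent product of this type the index shift is bounded. Concretely, I would show $\sum_j|\lambda_j-\nu_j|/\nu_j<\infty$ by telescoping, since the shift is bounded and $\sum(\nu_{j+3}-\nu_j)/\nu_j$ relates to a logarithmic divergence. If that fails, I would fall back to defining both products as limits along the chosen cutoffs $\Lambda$, which is how \eqref{prodotto} itself should be understood. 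With that convention the cancellation argument of Step 1 is exact for each cutoff, and the corollary follows.
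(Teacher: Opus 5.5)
Your Step 1 is exactly the paper's argument. The paper states the corollary without further proof, as an immediate consequence of \eqref{prodotto} and Proposition \ref{solodue}, obtained by cancelling $(n_\nu-2)_+$ common factors $(\nu-\kappa)$. You are right that re-pairing numerators and denominators needs justification, since the two products diverge separately. However, the mechanism you propose in Step 2 does not work.

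\textbf{The telescoping estimate.} A bounded index shift only gives $|\lambda_j-\nu_j|\le\nu_{j+3}-\nu_{j-3}$, and $\sum_j(\nu_{j+3}-\nu_{j-3})/\nu_j=+\infty$. Indeed, it dominates termwise $\sum_j(1-\nu_j/\nu_{j+1})$, which diverges because $\prod_{j<J}\nu_j/\nu_{j+1}=\nu_1/\nu_J\to0$. So the ``logarithmic divergence'' you mention defeats the estimate instead of proving summability. The summability $\sum_j|\lambda_j-\nu_j|<\infty$ is true, but for a different reason: $M$ is trace class, and Ky Fan's principle applied to its positive and negative parts gives $\sum_j|\lambda_j-\nu_j|\le\operatorname{Tr}|M|$.

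\textbf{The boundary factors and the fallback.} Your claim that the $O(1)$ boundary factors tend to $1$ is unproved. It needs something like $\nu_{j+m}/\nu_j\to1$ for fixed $m$, i.e.\ Weyl's law, which you wanted to avoid. The fallback weakens the statement rather than proving it. Agreement of partial products along a subsequence of cutoffs only identifies a subsequential limit; it does not give convergence of the $n$-indexed product in \eqref{rilevanti}. Moreover, cutting numerator and denominator separately at the value $\Lambda$ gives ratios with unequal numbers of factors whenever $N_L(\Lambda)\neq N_{-\Delta_g}(\Lambda)$, and these do not tend to $\Delta(\kappa)$.

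\textbf{A clean repair that avoids re-pairing.}
\begin{itemize}
\item By the proof of Proposition \ref{solodue}, choose $W_\nu\subset V_\nu\cap\ker M$ with $\dim W_\nu=(n_\nu-2)_+$, and let $W$ be the closed span of all the $W_\nu$.
\item Then $W$ reduces both $-\Delta_g$ and $L$, with $L|_W=-\Delta_g|_W$. Also $\mathrm{Ran}\,M\subset(\ker M)^\perp\subset W^\perp$, and $R_\kappa$ preserves $W^\perp$.
\item Hence $\det(I+MR_\kappa)$ is also the perturbative determinant of the pair $-\Delta_g|_{W^\perp}$ and $L|_{W^\perp}=-\Delta_g|_{W^\perp}+M|_{W^\perp}$. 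Their eigenvalue sequences are exactly $(\nu'_n)$ and $(\lambda'_n)$.
\item Applying the product formula \eqref{product-formula} of Appendix \ref{kato} to this pair gives \eqref{rilevanti} directly, at the same level of rigour as \eqref{prodotto}.
\end{itemize}
Equivalently, the spectral shift function $N_{-\Delta_g}-N_L$ is unchanged by the excision. So the proof in Appendix \ref{kato} runs verbatim with $h'_n:=\mathbf{1}_{[\nu'_n,\infty)}-\mathbf{1}_{[\lambda'_n,\infty)}$.

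If you prefer to keep your truncation route, just use Weyl's law, which is standard on a compact manifold. It handles both the boundary factors and the passage from cutoffs to the full $n$-indexed product, since each value occurs at most twice in $(\nu'_n)$.
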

Such a Corollary is relevant because it shows that $\dis$ is
completely determined by the sequence of the relevant eigenvalues, so
it is a spectral function.
{In Sect. \ref{simple} we will prove that for the states with
  finitely many spectral components, the relevant eigenvalues
are generically simple and Poisson commute. As a
corollary we will deduce the following theorem, which is our last
result.}

\begin{theorem}
  \label{ridimostro}
Consider the functions \eqref{def I}, then, forall $k_1,k_2\geq 2$ one
has
\begin{equation}
  \label{ridi}
\left\{H_{k_1},H_{k_2}\right\}=0\ ,\quad \forall
(u,v)\in\cH^{\max\left\{k_1,k_2\right\}} 
\end{equation}
and also
\begin{equation}
  \label{conH}
\left\{H,H_k\right\}=0\ ,\quad \forall
(u,v)\in\cH^{k}.
  \end{equation}
\end{theorem}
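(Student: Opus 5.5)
The involutivity will be derived from the spectral nature of $\dis(\kappa)$. Since $J(\lambda)=2\sum_{k\ge2}\lambda^{k-1}H_k$, Theorem \ref{perturbative} gives
\[
1+J(1/\kappa)=\dis(\kappa),
\]
so the $H_k$ are the coefficients of the expansion of $\dis(\kappa)$ in powers of $1/\kappa$. Likewise $H=-\frac12(\lim_{\kappa\to0}\dis(\kappa))$, since $\lambda\to\infty$ corresponds to $\kappa\to0$ and $\dis(0)=1+\lim_{\lambda\to\infty}J(\lambda)$. It is therefore enough to prove
\[
\{\dis(\kappa_1),\dis(\kappa_2)\}=0
\]
for all admissible $\kappa_1,\kappa_2$, and then extract coefficients. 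Both sides are analytic in $\kappa_1,\kappa_2$ away from the spectrum of $-\dg$, so coefficients can be compared through Cauchy integrals over large circles, or through the limit $\kappa\to0$ for $H$.

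\textbf{Step 1: reduction to finitely many modes.} First I will restrict to states with finitely many spectral components, i.e.\ $(u,v)\in \bigoplus_{\nu\in S}V_\nu\times V_\nu$ with $S$ finite. On such states $M$ has range in $\bigoplus_{\nu\in S}V_\nu$. Hence $\dis(\kappa)$ reduces to a finite product (Corollary \ref{rilevanti}) involving finitely many relevant eigenvalues $\lambda'_n$; the remaining factors equal $1$. The brackets $\{H_{k_1},H_{k_2}\}$ are continuous polynomial-type expressions in $(u,v)$ on $\cH^{\max(k_1,k_2)}$, and finite-mode states are dense. So the identity \eqref{ridi} extends once it is proved on this dense set. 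The bracket restricted to the invariant finite-mode subspace agrees with the full bracket, because the gradients of the $H_k$ at such states stay in the subspace (each $H_k$ depends only on quantities like $\langle u^{(i)},v^{(j)}\rangle$).

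\textbf{Step 2: the relevant eigenvalues Poisson commute.} This follows the announcement of Sect.~\ref{simple}, and I expect it to be the main obstacle. First I will show that generically (on an open dense set of the finite-mode phase space) the relevant eigenvalues are simple. For this I would exhibit one explicit state where the relevant eigenvalues are distinct, then use analyticity of the discriminant of $\dis$'s numerator polynomial.

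At a simple eigenvalue $\lambda'_n$ with normalized eigenvector $\psi_n$, Hellmann--Feynman gives $d\lambda'_n=\langle\psi_n, dM\,\psi_n\rangle$. By Theorem \ref{BKAPPA}, and more directly by the Lax pair for the flow of $\dis(\kappa)$ announced in the introduction, the Hamiltonian vector field of $\dis(\kappa)$ moves $L$ isospectrally: $\dot L=[B_\kappa,L]$. Therefore, along the flow of $\dis(\kappa_1)$, each simple $\lambda'_n$ is constant:
\[
\{\lambda'_n,\dis(\kappa_1)\}=\langle\psi_n,[B_{\kappa_1},L]\psi_n\rangle=0.
\]
Then $\{\dis(\kappa_2),\dis(\kappa_1)\}=\sum_n\partial_{\lambda'_n}\dis(\kappa_2)\,\{\lambda'_n,\dis(\kappa_1)\}=0$ on the generic set, and by continuity everywhere. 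In fact it suffices to use the Lax pairs $(L,B^{(k)})$ of Theorem \ref{BKAPPA} together with Theorem \ref{Teomain} for $H$. These give $\{\lambda'_n,H_k\}=0$ and $\{\lambda'_n,H\}=0$ directly. Since $\dis$ is a product of functions of the $\lambda'_n$, each $H_k$ is a function of the relevant eigenvalues, and so is $H$ through $\dis(0)$. Hence $\{H_{k_1},H_{k_2}\}=\sum_n \partial_{\lambda'_n}H_{k_1}\{\lambda'_n,H_{k_2}\}=0$.

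\textbf{Delicate points.} The dependence of $H_k$ on the $\lambda'_n$ must be differentiable. This holds because finitely many relevant eigenvalues enter $\dis$ as a finite product and the coefficients are symmetric functions of them. The other delicate point is genericity of simplicity. Step 3 then concludes by density, which gives \eqref{ridi} and \eqref{conH} on the stated spaces.
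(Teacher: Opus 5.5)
Your argument is correct, but it is organized differently from the paper's.

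\textbf{The paper's route.} The paper first proves Lemma~\ref{commutiamo}: the relevant eigenvalues Poisson commute \emph{among themselves}. Starting from $\{\lambda'_k,\dis(\kappa)\}=0$, it expands $\dis$ through the product \eqref{rilevanti}, clears denominators, evaluates at $\kappa=\lambda'_i$, and divides by $\prod_{m\neq i}(\lambda'_m-\lambda'_i)\neq0$. The theorem then follows from the two-sided chain rule $\{H_{k_1},H_{k_2}\}=\sum_{l_1,l_2}\partial_{\lambda'_{l_1}}H_{k_1}\,\partial_{\lambda'_{l_2}}H_{k_2}\,\{\lambda'_{l_1},\lambda'_{l_2}\}$ and density.

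\textbf{Your route.} Despite the title of your Step~2, you never prove (and do not need) $\{\lambda'_j,\lambda'_k\}=0$. You apply the chain rule on one side only, $\{H_{k_1},H_{k_2}\}=\sum_n\partial_{\lambda'_n}H_{k_1}\{\lambda'_n,H_{k_2}\}$. Each term vanishes because the flow of $H_{k_2}$ is isospectral by Theorem~\ref{BKAPPA}. For \eqref{conH} you use Theorem~\ref{Teomain} together with $H=-\frac12\dis(0)$, a case the paper's proof leaves implicit. This skips the evaluation trick.

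\textbf{Simplicity becomes unnecessary.} Pushed one step further, your route drops the genericity-of-simplicity step altogether. On $V_S\times V_S$ one has $\dis(\kappa)=\det(L_S-\kappa)/\det\big((-\dg)|_{V_S}-\kappa\big)$. The relation $dL_S[X]=[B,L_S]$, with $B$ leaving $V_S$ invariant, gives $d\det(L_S-\kappa)[X]=\det(L_S-\kappa)\Tr\big((L_S-\kappa)^{-1}[B,L_S]\big)=0$. Hence $dH_{k_1}[X_{H_{k_2}}]=0$ directly.

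\textbf{If you keep the eigenvalue route.} Hellmann--Feynman needs $\lambda'_n$ to be simple as an eigenvalue of $L_S$. So besides distinctness of the relevant eigenvalues, you must also generically exclude $\lambda'_n=\nu$ with $n_\nu\geq3$. Alternatively, differentiate the simple roots of the numerator of $\dis$ via the implicit function theorem.

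\textbf{What each route buys.} Yours is shorter and needs fewer genericity assumptions. The paper's longer route yields the stronger statement that the relevant eigenvalues themselves form a Poisson-commuting family, of which the $H_k$ and $H$ are functions.
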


\section{Proof of the  Lax pair result}\label{Lax}

In this section we prove the result on the Lax Pair of the KiPo
equation. {In the Lemmas we will not specify the needed smoothness. We
will always assume $u(\cdot)\in C^2\left(\R,H^3\right)$.}

Let us define
\begin{align}
\label{esse}
S&:=  \Id-\alpha\ket{u^{(1)}}\bra{u^{(1)}}
\\
		L_1&:=D^2-A^{-1}\ketbra{\dv }{\dv }.
		\label{eq:definition-L11}
\end{align}
In order to prove Theorem \ref{Teomain} we will use the representation
	\begin{equation}
		\label{glg1}
		L=SL_1S\ ,
	\end{equation}
which follows from a direct simple computation,
so first of all we compute a couple of formulae for $\dot S$. 

	\begin{lemma}\label{lemmaboh1}
		We have
		\begin{equation}\label{eq:Pdot}
			\dot S-BS=-A\,S\,\ketbra{v^{(1)}}{u^{(1)}},
			\qquad
			\dot S+SB=-A\,\ketbra{u^{(1)}}{v^{(1)}}\,S .
		\end{equation}
		\end{lemma}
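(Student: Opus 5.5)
The plan is a direct computation. The first identity in \eqref{eq:Pdot} will be proved by expanding both sides in the basis of rank-one operators built from $\du,\dv$. The second identity will then be obtained from the first by transposition.

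\emph{Step 1: time derivatives.} Since $u_t=v$ and $D$ is time independent, $\frac{d}{dt}\du=\dv$. Hence
\[
\dot S=-\dot\alpha\,\ketbra{\du}{\du}-\alpha\big(\ketbra{\dv}{\du}+\ketbra{\du}{\dv}\big).
\]
For $\dot\alpha$, we have $\frac{d}{dt}\norm{\du}^2=2\gscalar$, so $\dot A=-2A^2\gscalar$. Writing $\alpha=A/(1+\sqrt A)$ and differentiating in $A$ gives $\frac{d\alpha}{dA}=\frac{1+\sqrt A/2}{(1+\sqrt A)^2}$. Therefore $\dot\alpha=-2A^2\gscalar\,\frac{1+\sqrt A/2}{(1+\sqrt A)^2}$.

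\emph{Step 2: expand $BS$ and the right-hand side.} Using $B=\alpha(\ketbra{\dv}{\du}-\ketbra{\du}{\dv})$, we get
\[
BS=\alpha\ketbra{\dv}{\du}-\alpha\ketbra{\du}{\dv}-\alpha^2\norm{\du}^2\ketbra{\dv}{\du}+\alpha^2\gscalar\ketbra{\du}{\du},
\]
and
\[
-AS\ketbra{\dv}{\du}=-A\ketbra{\dv}{\du}+A\alpha\gscalar\ketbra{\du}{\du}.
\]
In the difference $\dot S-BS$ the $\ketbra{\du}{\dv}$ terms cancel. Comparing the coefficients of $\ketbra{\dv}{\du}$ and of $\ketbra{\du}{\du}$ then reduces the first identity to two scalar identities:
\[
\alpha^2\norm{\du}^2-2\alpha+A=0,\qquad \dot\alpha=-\gscalar\,\alpha(\alpha+A).
\]

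\emph{Step 3: the scalar identities.} The first identity is the defining quadratic for $\alpha$, since $\norm{\du}^2=(1-A)/A$. Indeed, $\alpha^2(1-A)/A-2\alpha+A=0$ has the root $\alpha=A/(1+\sqrt A)$; one checks this by multiplying out with $1-A=(1-\sqrt A)(1+\sqrt A)$. This is also the identity that makes \eqref{glg1} work.

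For the second identity, set $s=\sqrt A$. Then $\alpha(\alpha+A)=\frac{A}{1+s}\cdot\frac{A(2+s)}{1+s}=\frac{A^2(2+s)}{(1+s)^2}$. This matches $-\dot\alpha/\gscalar$ from Step 1, because $2(1+s/2)=2+s$.

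\emph{Step 4: transposition.} $S$ is symmetric, $B$ is skew-symmetric, and $(\ketbra{f}{g})^T=\ketbra{g}{f}$. Transposing the first identity therefore gives $\dot S+SB=-A\ketbra{\du}{\dv}S$, which is the second identity.

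The only delicate point is the bookkeeping of the scalar identities in Step 3. Everything else is mechanical expansion of rank-one products.
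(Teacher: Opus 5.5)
Your proposal is correct and takes essentially the same route as the paper. Both expand $\dot S$ and $BS$ in rank-one operators built from $u^{(1)},v^{(1)}$, reduce the first identity to the two scalar facts $\alpha(2-\alpha\|u^{(1)}\|^2)=A$ and $\dot\alpha=-\alpha(\alpha+A)\mathtt q_2$, and get the second identity by taking adjoints; the only difference is that you verify these scalar identities explicitly, while the paper simply asserts them.
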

		\begin{proof}
			From the definition of $S$, we get
			\begin{equation*}
				\dot S=-\dot\alpha\,\ketbra{u^{(1)}}{u^{(1)}}
				-\alpha\bigl(\ketbra{v^{(1)}}{u^{(1)}}+\ketbra{u^{(1)}}{v^{(1)}}\bigr).
			\end{equation*}
			and
			\begin{equation*}
				BS=\alpha\bigl(1-\alpha\norm{u^{(1)}}^2\bigr)\ketbra{v^{(1)}}{u^{(1)}}
				-\alpha\,\ketbra{u^{(1)}}{v^{(1)}}
				+\alpha^2q_2\,\ketbra{u^{(1)}}{u^{(1)}}.
			\end{equation*}
				Subtracting, the terms $\ketbra{\uu1}{\vv1}$ cancel and
			\begin{equation}\label{eq:PdotBP}
				\dot S-BS=-\bigl(\dot\alpha+\alpha^2\mathtt{q}_2\bigr)\ketbra{\uu1}{\uu1}
				-\alpha\bigl(2-\alpha\norm{\uu1}^2\bigr)\ketbra{\vv1}{\uu1}.
			\end{equation}
			By \eqref{eq:def-pq},
			$\alpha\bigl(2-\alpha\norm{\uu1}^2\bigr)=A$.\\
			Note that, since
			$\frac{d}{dt}\norm{\uu1}^2=2\mathtt{q}_2$, we have $\dot\alpha=-\alpha(\alpha+A)\mathtt{q}_2$. 
			Inserting these two facts into \eqref{eq:PdotBP}, we obtain the first identity in \eqref{eq:Pdot}. The second one follows by
			taking adjoints.
		\end{proof}
Then we have the following Lemma 

\begin{lemma}\label{Laxgen}
		Let $v:=u_t$, then we have
		\begin{equation}\label{carina1}
			\dot{L}-[B,L]=-A^{-1}\Bigl[\left(SD(\dot{v}+A^2u^{(2)})\right)\vee(Sv^{(1)})\Bigr]
		\end{equation}
	\end{lemma}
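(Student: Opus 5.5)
We will use the factorization $L=SL_1S$ of \eqref{glg1} together with the two identities of Lemma \ref{lemmaboh1}. Differentiating gives
\[
\dot L=\dot S L_1 S+S\dot L_1 S+SL_1\dot S ,
\]
and we want to compare this with $[B,L]=BSL_1S-SL_1SB$. Subtracting, we get
\[
\dot L-[B,L]=(\dot S-BS)L_1S+S\dot L_1S+SL_1(\dot S+SB).
\]
Inserting \eqref{eq:Pdot} turns the first term into $-A\,S\ketbra{\vv1}{\uu1}L_1S$ and the last into $-A\,SL_1\ketbra{\uu1}{\vv1}S$. Hence
\[
\dot L-[B,L]=S\Bigl(\dot L_1-A\ketbra{\vv1}{\uu1}L_1-A\,L_1\ketbra{\uu1}{\vv1}\Bigr)S .
\]
The whole problem is thus reduced to computing the middle operator. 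This uses only the $L_1$ factor, which is explicit.

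Next we compute the pieces. Since $L_1=D^2-A^{-1}\ketbra{\vv1}{\vv1}$, we have
\[
\dot L_1=-\tfrac{d}{dt}(A^{-1})\,\ketbra{\vv1}{\vv1}-A^{-1}\bigl(\ket{D\dot v}\bra{\vv1}+\ket{\vv1}\bra{D\dot v}\bigr),
\]
with $\frac{d}{dt}A^{-1}=2\tq_2$. We also need $\bra{\uu1}L_1=\bra{u^{(3)}}-A^{-1}\tq_2\bra{\vv1}$, using the self-adjointness of $D$ and $\langle\uu1,\vv1\rangle=\tq_2$. This gives
\[
A\ketbra{\vv1}{\uu1}L_1=A\ketbra{\vv1}{u^{(3)}}-\tq_2\ketbra{\vv1}{\vv1},
\]
and symmetrically for the adjoint term. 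Summing, the $\ketbra{\vv1}{\vv1}$ coefficients should cancel: they are $-2\tq_2$ from $\dot L_1$ and $+\tq_2+\tq_2$ from the two other terms. What remains is
\[
-A^{-1}\Bigl(\ket{D\dot v+A^2u^{(3)}}\bra{\vv1}+\ket{\vv1}\bra{D\dot v+A^2u^{(3)}}\Bigr)=-A^{-1}\bigl(D(\dot v+A^2u^{(2)})\bigr)\vee\vv1 .
\]

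Finally we conjugate by $S$, which is self-adjoint. Since $S(f\vee g)S=(Sf)\vee(Sg)$, we obtain exactly the right-hand side of \eqref{carina1}.

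The main points needing care are the sign conventions and the derivative of $A^{-1}=1+\|\uu1\|^2$. These must make the $\ketbra{\vv1}{\vv1}$ terms cancel exactly. If they did not, the guessed factorization would need revisiting, but the stated form of \eqref{carina1} strongly suggests they do. We also need enough regularity to differentiate in operator norm, which $C^2(\R,H^3)$ provides, since $D\dot v\in H^0$ and $u^{(3)}\in\cL$.
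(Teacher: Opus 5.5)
Your proposal is correct and follows the paper's proof essentially step by step. You use the same ingredients: the factorization $L=SL_1S$, the reduction via Lemma~\ref{lemmaboh1} to $S\bigl(\dot L_1-A\ketbra{\vv1}{\uu1}L_1-A\,L_1\ketbra{\uu1}{\vv1}\bigr)S$, the identity $\bra{\uu1}L_1=\bra{u^{(3)}}-A^{-1}\tq_2\bra{\vv1}$, and the cancellation of the $\ketbra{\vv1}{\vv1}$ terms; your remark that $S(f\vee g)S=(Sf)\vee(Sg)$ by self-adjointness of $S$ spells out the final step the paper leaves implicit.
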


                \begin{proof}
			From \eqref{glg1} , we have 
			\begin{equation*}
		\dot L-[B,L]=(\dot S-BS)\,L_1S+S\dot L_1S+SL_1\,(\dot S+SB),
					\end{equation*}
	that, together with Lemma \ref{lemmaboh1}, yields
		\begin{equation}\label{eq:conj}
			\dot L-[B,L]
			=S\Bigl(-A\,\ketbra{\vv1}{\uu1}\,L_1-A\,L_1\,\ketbra{\uu1}{\vv1}+\dot L_1\Bigr)S .
		\end{equation}
		From \eqref{eq:definition-L11} we have
		\begin{equation}
			\dot L_1=-2\mathtt{q}_2\,\ketbra{v^{(1)}}{v^{(1)}}-A^{-1}\,(D\dot v)\vee v^{(1)}
		\end{equation}
Finally, noting that $\bra{u^{(1)}}L_1=\bra{u^{(3)}}-A^{-1}\mathtt{q}_2\bra{v^{(1)}}$, we obtain
		\begin{align*}
			-A\,\ketbra{\vv1}{\uu1}\,L_1-A\,L_1\,\ketbra{\uu1}{\vv1}+\dot L_1
			&=-A\,u^{(3)}\vee v^{(1)}-A^{-1}\,(D\dot v)\vee v^{(1)}\\
			&=-A^{-1}\bigl(D(\dot v+A^2u^{(2)})\bigr)\vee v^{(1)}.
		\end{align*}
Substituting in the \eqref{eq:conj} we get the \eqref{carina1}.
	\end{proof}

	\begin{proof}[Proof of Theorem \ref{Teomain} (i) ]
		If we assume that $t\mapsto (u(t),v(t))$ is a sufficiently regular solution of \eqref{KiPo} then for all $t$ one has $\dot{v}+A^2u^{(2)}$ so that, by \eqref{carina1}, we deduce that \eqref{lax0} holds.
		
	In order to prove that \eqref{lax0} implies \eqref{KiPo} (provided we assume $\dot u=v\not\equiv0$), we use \eqref{carina1} and start by noticing that, at any fixed time $t$, the condition $\left(SD(\dot{v}+A^2u^{(2)})\right)\vee(Sv^{(1)})=0$ implies $SD(\dot{v}+A^2u^{(2)})=0$ or $Sv^{(1)}=0$. Recall that $S$ is invertible, and that $D$ is invertible on spaces of functions with zero average in $x$. Thus, equation \eqref{lax0} implies that, at any given time, either $v=0$ or $\dot v+A^2u^{(2)}=0$. Our goal is to prove that, indeed, one has $\dot v+A^2u^{(2)}=0$ for all $t$, i.e.\ \eqref{KiPo} is satisfied. Now, the case where $v(t)=0$ for all $t$ is excluded by the assumption that $\dot u=v$ and that $(u(t),v(t))$ is not constant in time. Since $u$ is $C^2$ in time by assumption, both $v$ and $\dot v+A^2u^{(2)}$ are continuous in time, so the only case left to exclude is that $v\equiv0$, with $\dot v+A^2u^{(2)}\not\equiv0$, on some time intervals of positive length and $\dot v+A^2u^{(2)}\equiv0$, with $v\not\equiv0$, on some other time intervals of positive length.
	
To rule this out, assume by contradiction that there exists an interval $I\subsetneqq\R$ of positive length such that: $(i)$ $\dot v+A^2u^{(2)}\equiv0$ on $I$; $(ii)$ there is no interval $J\supsetneqq I$ with $\dot v+A^2u^{(2)}\equiv0$ on $J$ (i.e.\ $I$ is maximal w.r.t.\ the property that $\dot v+A^2u^{(2)}\equiv0$); $(iii)$ $v\not\equiv0$ on $I$. Let $t_0$ be an endpoint of $I$. Since $u$ is $C^2$ in time, we have $\dot v(t_0)+A^2u^{(2)}(t_0)=0$. By the maximality of $I$, there exists a sequence $t_n \to t_0$ such that $\dot v(t_n)+A^2u^{(2)}(t_n)\neq0$. Therefore, one must have $v(t_n)=0$, which in turn implies $v(t_0)=0$ by continuity. Since $v$ is $C^1$ in time, there exists $\dot v(t_0)=\lim_{n\to\infty} (v(t_n)-v(t_0))/(t_n-t_0)=0$. Plugging this into $\dot v(t_0)+A^2u^{(2)}(t_0)=0$, we deduce $u^{(2)}(t_0)=0$. By the injectivity of the Laplacian on spaces of functions with zero average, we get $u(t_0)=0$. We have $u(t_0)=0$, $v(t_0)=0$ and, by the well-posedness of the Cauchy problem for \eqref{KiPo} on the interval $I$, this implies $u\equiv v\equiv 0$ on $I$, which is a contradiction.
\end{proof}

We end this section with the proof of Proposition \ref{solodue}.

\noindent{\it Proof of Proposition \ref{solodue}}
 Given $\nu\in \sigma(-\dg)$, we denote by $V_\nu$ the related
 eigenspace and by $\Pi_\nu$ the orthogonal projector on $V_\nu$.
We have that $\nu$ is an eigenvalue for $L$ if there is a solution of 
\begin{equation}\label{boh3}
(D^2-\nu)x=-Mx.
\end{equation}
It is clear that all the elements of $V_{\nu}\cap \ker(M)$ are
solutions of \eqref{boh3}. So we study the Kernel of the
restriction of $M$ to $V_\nu$. By Eq. \eqref{eq:M}, one has that $f\in
V_{\nu}\cap \ker(M)$ if and only if
\begin{align*}
  0=\ip{u^{(1)}}f=\ip{\Pi_\nu
    u^{(1)}}f=\nu^{1/2}\ip{u}f\ ,\quad\iff\quad \ip u f=0
  \\
  0=\ip{u^{(3)}}f=\ip{\Pi_\nu u^{(3)}}f=\nu^{3/2}\ip{u}f\ ,
  \quad\iff\quad \ip u f=0
  \\
  0=\ip{v^{(1)}}f=\ip{\Pi_\nu v^{(1)}}f=\nu^{1/2}\ip{v}f\ ,
  \quad\iff\quad \ip v f=0,
\end{align*}
so if and only if $\ip u f=\ip v f=0$, which shows that
$$
\dim\left(V_\nu\cap Ker(M)\right)\geq (n_\nu-2)_+,
$$
which immediately implies the thesis.
\qed

\section{The perturbative determinant and its product representation. }\label{birman} 

In this section we prove Theorem \ref{perturbative} and prove the 
explicit formula for the perturbative determinant. 


\begin{lemma}
The identity 
	\begin{equation}\label{identitaU}
		U(\kappa)=1-\kappa\langle u^{(1)};R_\kappa u^{(1)}\rangle
	\end{equation}
holds.
\end{lemma}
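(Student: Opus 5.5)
We have $U(\kappa)=A^{-1}-\langle u^{(2)};R_\kappa u^{(2)}\rangle$ and $A^{-1}=1+\|u^{(1)}\|^2$. The plan is to rewrite $u^{(2)}=D\,u^{(1)}$ and to use that $D$ commutes with $R_\kappa$, since both are functions of $-\Delta_g$. This gives
\[
\langle u^{(2)};R_\kappa u^{(2)}\rangle=\langle u^{(1)};D^2R_\kappa u^{(1)}\rangle .
\]

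The key algebraic step is the resolvent identity $D^2R_\kappa=(-\Delta_g-\kappa+\kappa)R_\kappa=I+\kappa R_\kappa$. This holds on $\cL$ for $\kappa\notin\sigma(-\Delta_g)$. Substituting it, we get
\[
\langle u^{(2)};R_\kappa u^{(2)}\rangle=\|u^{(1)}\|^2+\kappa\langle u^{(1)};R_\kappa u^{(1)}\rangle .
\]
Hence
\[
U(\kappa)=1+\|u^{(1)}\|^2-\|u^{(1)}\|^2-\kappa\langle u^{(1)};R_\kappa u^{(1)}\rangle=1-\kappa\langle u^{(1)};R_\kappa u^{(1)}\rangle ,
\]
which is \eqref{identitaU}.

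There is essentially no obstacle. The only point needing care is justifying the manipulations for $u\in H^2$. We have $u^{(2)}\in\cL$, $u^{(1)}\in H^1$, and $D R_\kappa$ is bounded. One can verify the identity on the eigenbasis of $-\Delta_g$, where it reduces to the scalar identity $\nu/(\nu-\kappa)=1+\kappa/(\nu-\kappa)$ summed against $|\hat u_\nu|^2\nu$. The resulting series converge absolutely since $u\in H^2$.
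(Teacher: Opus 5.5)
Your proof is correct and follows essentially the same route as the paper: both split $D^2=(D^2-\kappa)+\kappa$ inside the resolvent pairing to extract $\|u^{(1)}\|^2$, which cancels against $A^{-1}-1$. The only difference is that you first move one factor of $D$ across the pairing while the paper applies the splitting to $D^2u$ directly, and your eigenbasis remark supplies a justification that the paper leaves implicit.
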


\begin{proof} One has
		\begin{equation}
		\begin{aligned}
			U(\kappa):&=A^{-1}-\langle u^{(2)};R_\kappa u^{(2)}\rangle =1+\|u^{(1)}\|^2-\langle u^{(2)};R_\kappa u^{(2)}\rangle\\
			&=1+\|u^{(1)}\|^2-\langle D^2u;(D^2-\kappa\Id)^{-1} D^2u\rangle\\
			&=1+\|u^{(1)}\|^2-\langle D^2u;(D^2-\kappa\Id)^{-1} (D^2-\kappa+\kappa)u\rangle\\
			&=1-\kappa\langle u^{(1)};R_\kappa u^{(1)}\rangle\,.
		\end{aligned}
	\end{equation}
\end{proof}

\begin{proposition}
	\label{generatrice.1}
Let $\dis$ be defined by \eqref{Pert.det}, then one has
	\begin{equation}
		\label{bir.1}
\dis(\kappa)=J(\kappa^{-1})+1\,.
	\end{equation}
       \end{proposition}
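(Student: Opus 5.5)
Since the identity $U(\kappa)V(\kappa)-\kappa Q^2(\kappa)=J(\kappa^{-1})+1$ is taken from Section 4.1 of \cite{BHMP}, it suffices to prove $\dis(\kappa)=U(\kappa)V(\kappa)-\kappa Q^2(\kappa)$. The plan is to use the factorization \eqref{glg1}, $L=SL_1S$, rather than the rank-3 form \eqref{eq:M} of $M$, because it splits the determinant into a rank-one piece for $S$ and a rank-one piece for $L_1$.

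First note that $S$ differs from $I$ by a rank-one operator, and that $S^2=I-(2\alpha-\alpha^2\norm{\du}^2)\ketbra{\du}{\du}=I-A\ketbra{\du}{\du}$, using the identity $\alpha(2-\alpha\norm{\du}^2)=A$ from the proof of Lemma \ref{lemmaboh1}. Finite-rank perturbations are trace class, so we may write $\dis(\kappa)=\det\bigl((L-\kappa)R_\kappa\bigr)$ as a Fredholm determinant. Then
\[
L-\kappa=S(L_1-\kappa S^{-2})S .
\]
Using multiplicativity and $\det(S(\cdot)S)=\det(S^2)\det(\cdot)$ (the cyclicity is legitimate since every factor is $I+$finite rank), we get
\[
\dis(\kappa)=\det(S^2)\,\det\bigl((L_1-\kappa S^{-2})R_\kappa\bigr).
\]
Here $\det S^2=1-A\norm{\du}^2=A$, and by Sherman--Morrison $S^{-2}=I+\frac{A}{1-A\norm{\du}^2}\ketbra{\du}{\du}=I+\ketbra{\du}{\du}$, because $1-A\norm{\du}^2=A$.

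Next, $L_1-\kappa S^{-2}=(D^2-\kappa)-A^{-1}\ketbra{\dv}{\dv}-\kappa\ketbra{\du}{\du}$ is a rank-two perturbation of $D^2-\kappa$. Its perturbative determinant is the $2\times2$ determinant
\[
\det\begin{pmatrix}1-\kappa\langle\du,R_\kappa\du\rangle & -\kappa\langle \du,R_\kappa\dv\rangle\\ -A^{-1}\langle\dv,R_\kappa\du\rangle & 1-A^{-1}\langle\dv,R_\kappa\dv\rangle\end{pmatrix}.
\]
Multiplying by $A$, the factor $A$ goes into the second row. By \eqref{identitaU} the $(1,1)$ entry is $U(\kappa)$, and $A-\langle\dv,R_\kappa\dv\rangle=V(\kappa)$. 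The off-diagonal product gives $\kappa Q(\kappa)^2$, since $R_\kappa$ is symmetric. This yields $\dis=UV-\kappa Q^2$.

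The main obstacle is justifying the determinant manipulations. The paper defines $\det(I+MR_\kappa)$ through restriction to a finite-dimensional subspace, so I have to check that this agrees with the Fredholm determinant and that the factorization steps are valid for trace-class perturbations. I would settle this with the standard facts recalled in Appendix \ref{kato}. Alternatively, one can work directly on the finite-dimensional space $\mathrm{span}\{\du,\dv,\duuu,R_\kappa\du,\dots\}$. A fallback is a direct $3\times3$ computation with the vectors $\du,\duuu,w$ from \eqref{eq:M}, simplified using $\langle\duuu,R_\kappa\du\rangle=\norm{\du}^2+\kappa\langle\du,R_\kappa\du\rangle$. That route is heavier but purely mechanical.
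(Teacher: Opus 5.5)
Your proposal is correct and follows the paper's proof essentially step by step: the same factorization $L-\kappa=S(L_1-\kappa S^{-2})S$ with $S^{-2}=I+\lvert u^{(1)}\rangle\langle u^{(1)}\rvert$, the same splitting of the determinant, and the same $2\times 2$ matrix giving $A^{-1}\bigl(UV-\kappa Q^2\bigr)$. The only cosmetic difference is that the paper makes your ``cyclicity'' step explicit, by writing $(L-\kappa)R_\kappa=S\,[(L_1-\kappa S^{-2})R_\kappa]\,[R_\kappa^{-1}SR_\kappa]$ and computing $\det S=\det(R_\kappa^{-1}SR_\kappa)=\sqrt A$, which is precisely your $\det(S^2)=A$.
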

\begin{proof}
We compute the determinant on the l.h.s. of \eqref{Pert.det}; to this end,
it is useful to define $G:=S^{-2}$.
By construction (see \eqref{esse}) and
$$\alpha\bigl(2-\alpha\norm{\uu1}^2\bigr)=A
$$
we have
\[
S^2 = \Id - \alpha(2  -\alpha  \|\du\|^2) \ketbra{\du}{\du} = \Id - A \ketbra{\du}{\du} 
\] 
so that (using $A(1+\|\du\|^2)=1$)
\begin{equation*}
		G=I+\lvert \du\rangle\langle \du \rvert .
	\end{equation*}

Since \(SGS=\Id\), formula \eqref{glg1} gives
\begin{equation*}
	L-\kappa \Id 
	=SL_1S-\kappa SGS
	=S(L_1-\kappa G)S.
	\end{equation*}
It follows that
\begin{equation*}
	(L-\kappa\Id)R_\kappa
	=S\big[(L_1-\kappa G)R_\kappa\big]
	\big[R_\kappa^{-1}SR_\kappa\big].
	\end{equation*}
Each factor on the right-hand side is the identity plus a finite-rank
operator, more precisely
\begin{align*}
	R_\kappa^{-1}SR_\kappa
	=\Id-\alpha
	\ket{(D^2-\kappa)\du}
	\bra{\du}R_\kappa.
	\\
	(L_1-\kappa G)R_\kappa
	=\Id
	-A^{-1}\ketbra{\dv}{\dv}R_\kappa
	-\kappa\ketbra{\du}{\du}R_\kappa.
	\end{align*}
Therefore, one has
\begin{equation}
  \label{delta.P}
  \det(\Id+MR_\kappa)
	=\det(S)\,
	\det\big[(L_1-\kappa G)R_\kappa\big]\,
	\det\big[R_\kappa^{-1}SR_\kappa\big].
\end{equation}
One can compute each determinant using Eq. \eqref{A.1}
which gives
\begin{equation}
	\det(S)=\sqrt A,\qquad
	\det\big[R_\kappa^{-1}SR_\kappa\big]=\sqrt A\ ,
	\label{eq:det-S}
\end{equation}
while 
\begin{align}
	\label{detL_1}
	\det\big[(L_1-\kappa G)R_\kappa\big]
	& =  \det
	\begin{pmatrix}
		1-\kappa \ip{\du}{R_\kappa\du} & -\kappa Q(\kappa)\\
		-A^{-1}Q(\kappa)   & 1-A^{-1}\ip{\dv}{R_\kappa\dv}
	\end{pmatrix}
	\\
	&=\det
	\begin{pmatrix}
		U(\kappa) & -\kappa Q(\kappa)\\
		-A^{-1}Q(\kappa)   & A^{-1}V(\kappa)
	\end{pmatrix} .
\end{align}
Combining with \eqref{eq:det-S}, one gets the result.
\end{proof}

\section{On the simplicity of the relevant spectrum}\label{simple}

We prove here that for states with finitely many spectral components,
generically all the relevant eigenvalues $\lambda_n'$ are distinct and
lie outside $\sigma(-\dg)$.

  Let $S\subset \sigma(-\dg)$ be finite. Denote by
  $$
V_S:=\mathop{\oplus}_{\nu\in S}V_\nu
$$ the spectral subspace corresponding to $S$ and remark that if
\begin{equation}
  \label{sottospazio}
(u,v)\in V_S\times V_S
  \end{equation}
then $V_S$ is invariant under the action of $L$. Assume
\eqref{sottospazio} and denote 
 by
$L_S:=L\big|_{V_S}$ the restriction of $L$ to 
$V_S$,
{so that $L= L_S \oplus -\dg\big|_{V_S^\perp}$.}

We have the next Lemma.

\begin{lemma}
  \label{finite gap}
There exists an open and dense subset $\cU_S\subset V_S\oplus
V_S $ s.t., if $(u,v)\in \cU_S$ then $\sigma'(L_S)$ is simple {and the
relevant eigenvalues are analytic functions of $(u,v)$}.
\end{lemma}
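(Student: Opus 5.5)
Since $V_S$ is finite dimensional, $L_S$ is a real symmetric matrix whose entries are polynomial in $(u,v)$ and in $\alpha,A$. Both $\alpha$ and $A$ are real-analytic in $u$, so $L_S$ depends real-analytically on $(u,v)\in V_S\oplus V_S$. The plan is to construct an explicit analytic function whose nonvanishing characterizes simplicity of $\sigma'(L_S)$, and then to use the standard fact that the complement of the zero set of a nontrivial real-analytic function on a connected set is open and dense.

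\textbf{Step 1: factor off the irrelevant eigenvalues.} By the proof of Proposition \ref{solodue}, for each $\nu\in S$ the subspace $W_\nu:=\{f\in V_\nu:\ip uf=\ip vf=0\}$ lies in $\ker M$. Hence $W:=\oplus_{\nu\in S}W_\nu$ is invariant for $L_S$, and $L_S$ acts on it as $-\dg$. Since $L_S$ is symmetric, $W^\perp\cap V_S$ is also invariant. This orthogonal complement is spanned by the vectors $\Pi_\nu u,\Pi_\nu v$ for $\nu\in S$, so it has dimension at most $2|S|$, with equality on an open dense set where these vectors are independent. On that set, $\sigma'(L_S)$ is exactly the spectrum of $L'_S:=L_S|_{W^\perp\cap V_S}$. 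Let $\chi(\kappa)$ be its characteristic polynomial. To get analytic coefficients, I would avoid choosing a basis and use the product formula \eqref{rilevanti} restricted to $V_S$. Writing $p(\kappa):=\prod_{\nu\in S}(\nu-\kappa)^{\min(n_\nu,2)}$, this gives
\begin{equation*}
\chi(\kappa)=p(\kappa)\,\dis_S(\kappa),
\end{equation*}
where $\dis_S(\kappa)=U V-\kappa Q^2$ is computed with finite sums over $S$. The right-hand side is a polynomial in $\kappa$ whose coefficients are analytic in $(u,v)$ on all of $V_S\oplus V_S$.

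\textbf{Step 2: the discriminant.} Define $\mathrm{disc}(u,v)$ as the discriminant of $\chi$ in $\kappa$. It is real-analytic on the connected space $V_S\oplus V_S$, and $\sigma'(L_S)$ is simple exactly where $\mathrm{disc}\neq0$. Define $\cU_S$ as the set where $\mathrm{disc}\neq0$ and the vectors $\Pi_\nu u,\Pi_\nu v$ are independent. On $\cU_S$ all roots of $\chi$ are simple, so the implicit function theorem applied to $\chi(\kappa;u,v)=0$ makes each root analytic in $(u,v)$. The set $\cU_S$ is open, and it is dense provided $\mathrm{disc}\not\equiv0$.

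\textbf{Step 3 (main obstacle): exhibit one point with simple relevant spectrum.} I would try the point $v=0$ first, but there $Q=0$ and $\dis_S=UV$ factorizes, which may create double roots, so I would perturb it. My first choice is instead a scaling limit, taking $(u,v)=(\varepsilon a,\varepsilon b)$ with $\varepsilon$ small. In that limit $M\approx \varepsilon^2(-\tfrac12 a^{(1)}\vee a^{(3)}-\tfrac12 b^{(1)}\vee b^{(1)})$, so on each $2$-dimensional block spanned by $\Pi_\nu u,\Pi_\nu v$ the eigenvalues split from $\nu$ at order $\varepsilon^2$. These splittings are given by a $2\times2$ matrix involving $\nu^2\|\Pi_\nu a\|^2$, $\nu\|\Pi_\nu b\|^2$ and $\ip{\Pi_\nu a}{\Pi_\nu b}$. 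Choosing $a$ and $b$ within each $V_\nu$ so that these small $2\times2$ matrices have distinct eigenvalues gives simple first-order shifts, and distinct $\nu$ values keep the clusters apart. The cases $n_\nu=1$, with one relevant eigenvalue, are handled the same way. First-order perturbation theory then shows that $\mathrm{disc}\neq0$ for small $\varepsilon$. The delicate point is checking that the leading-order $2\times2$ blocks are genuinely nondegenerate, and I expect the explicit computation there to be the main work.
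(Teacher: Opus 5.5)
Your proposal is correct and shares the paper's skeleton:
\begin{itemize}
\item a polynomial in $\kappa$ with analytic coefficients whose roots are $\sigma'(L_S)$;
\item analyticity of its discriminant;
\item nontriviality of the discriminant from a single witness point;
\item the implicit function theorem for the simple roots.
\end{itemize}
You differ in how the polynomial is produced and, mainly, in the choice of witness.

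\textbf{The polynomial.} The paper divides the characteristic polynomial of $L_S$ by the fixed factor $\prod_{\nu\in S}(\nu-\kappa)^{(n_\nu-2)_+}$, which is legitimate by Proposition \ref{solodue}. Your $p(\kappa)\,(UV-\kappa Q^2)$ is the same polynomial for every $(u,v)$. You need it to be a polynomial on all of $V_S\oplus V_S$, since your independence set is disconnected when some $n_\nu=2$. This follows either from that identity, or from a direct pole count: when $n_\nu=1$, the double pole of $UV-\kappa Q^2$ at $\nu$ cancels because the Gram determinant of $\Pi_\nu u,\Pi_\nu v$ vanishes.

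\textbf{The witness.} The paper argues at finite amplitude. It takes $u_\nu\perp v_\nu$, $\|u_\nu\|=1$, $\|v_\nu\|=\tau$. Then the spans of the $u_\nu$ and of the $v_\nu$ are invariant, and the two restricted perturbative determinants are monotone between consecutive poles. So each has exactly $|S|$ simple zeros interlacing with $S$, and excluding finitely many $\tau$ separates the two families.

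You replace this exact interlacing argument by first-order degenerate perturbation theory at $(\varepsilon a,\varepsilon b)$. The step you expect to be the main work is in fact immediate. One has $\Pi_\nu M_0\Pi_\nu=-\nu^2\lvert a_\nu\rangle\langle a_\nu\rvert-\nu\lvert b_\nu\rangle\langle b_\nu\rvert$. Take $a_\nu\perp b_\nu$, both nonzero, with $\nu\|a_\nu\|^2\neq\|b_\nu\|^2$. Then the two relevant eigenvalues near $\nu$ are $\nu-\varepsilon^2\nu^2\|a_\nu\|^2+O(\varepsilon^4)$ and $\nu-\varepsilon^2\nu\|b_\nu\|^2+O(\varepsilon^4)$. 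For small $\varepsilon$ these are distinct from each other and from the other clusters.

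\textbf{Comparison.} The paper's route gives nonperturbative information on where the relevant eigenvalues sit. Yours is shorter and uses only the leading form of $M$. It yields simplicity only near the origin, but that is all the analyticity argument needs.

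\textbf{Two small slips.}
\begin{itemize}
\item $\dim(W^\perp\cap V_S)$ is generically $\sum_{\nu\in S}\min(n_\nu,2)$, not $2|S|$. For $n_\nu=1$ the correct genericity condition is $(\Pi_\nu u,\Pi_\nu v)\neq 0$.
\item The independence condition in your $\cU_S$ is superfluous. The roots of $p\,\dis_S=\det(L_S-\kappa)/\prod_{\nu\in S}(\nu-\kappa)^{(n_\nu-2)_+}$ are exactly $\sigma'(L_S)$ for every $(u,v)$.
\end{itemize}
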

\proof {The proof is based on the fact} that the relevant part of
the spectrum of $L_S$ is the solution of a polynomial equation with
coefficients that are analytic functions of $(u,v)\in V_S\oplus V_S$,
namely the ratio between the characteristic equation of $L_S$ and
$\prod_{\nu\in S}(\nu-\kappa)^{(n_\nu-2)_+}$. {The regularity of the
coefficients  is deduced from the regularity of the coefficients of the
characteristic equation of $L_S$ and the fact that $\prod_{\nu\in
  S}(\nu-\kappa)^{(n_\nu-2)_+}$ is independent of $u$ and $v$.}

{In particular one thus has that if the relevant eigenvalues are simple
then they are also analytic functions of $(u,v)$.}

From analyticity of the coefficients it also follows that the
discriminant\footnote{We recall that the discriminant of a polynomial
$P$ is a polynomial expression in the coefficients of $P$, with the
property that it vanishes if and only if $P$ has a repeated root} of
this polynomial is an analytic function of $(u,v)$. So, if we show
that such a discriminant is a nontrivial function, then, by
analyticity it is nonvanishing on an open and dense subset of
$V_S\oplus V_S$. 

To show that the above discriminant is a nontrivial function it is
enough to exhibit one couple $(u,v)$ with the property that the relevant
part of the spectrum of $L_S$ is simple. This is what we are now going
to do.

We consider only the case where $n_\nu\geq 2$ for all $\nu\in S$. The
other case can be easily obtained by simply eliminating the part of
the forthcoming construction involving $v_{\nu}$.
We start by decomposing $u,v$ in spectral components
\[
(u,v)=\{u_\nu,v_\nu\}_{\nu \in S}\,,\qquad (u_\nu,v_\nu) \in V_\nu\oplus V_\nu
\] 
{with $u_\nu$ orthogonal to $v_\nu$ for all $\nu\in S$ and}
with $\|u_\nu\|=1$ and $\|v_\nu\|= \tau$ (here $\tau$ is some parameter we modulate). Then one can easily verify that $F_1={\rm span} _{\nu\in S}u_\nu$ and $F_2:={\rm span}_{\nu\in S}{
 v_\nu}$  are invariant for $L$, while on the orthogonal $L$ acts
 as $-\dg$.
We construct the eigenvalues of $L$ on each one of these
subspaces and show that one of them for each subspace is the relevant
eigenvalue. 

First we remark that, with this choice, 
\begin{align}
  \label{}
L\big|_{F_1}=SD^2S\ ,\quad
L\big|_{F_2}=D^2-A^{-1}\ketbra{v^{(1)}}{v^{(1)}}. 
\end{align}
So the eigenvalues of $L\big|_{F_1}$ and of $L\big|_{F_2}$ not belonging to the spectrum of $-\dg$
are the zeros of the perturbative determinants of each one of
them. Now, by the computation in Appendix \ref{app:rank-one} one has
that the perturbative determinant of $L\big|_{F_2}$ is given by
\begin{equation}
  \label{f2}
\dis_{F_2}(\kappa)=1-A^{-1}\ip{\dv}{R_\kappa\dv}=1- A^{-1}\sum_{\nu\in
  S}\tau^2\frac{\nu}{\nu-\kappa}.
  \end{equation}

Remark that, since
$$
\frac{d\dis_{F_2}}{d\kappa}=-\sum_{\nu\in
  S}\tau^2\frac{\nu}{(\nu-\kappa)^2}<0,
$$
the function $\dis_{F_2}$ has exactly one zero between two different
values of $\nu$. As a consequence $L\big|_{F_2}$  has simple relevant spectrum.  

Concerning the perturbative determinant of $L\big|_{F_1}$, proceeding
as in Appendix \ref{app:rank-one}  we get that it is
given by
\begin{equation}
  \label{f1}
\dis_{F_1}(\kappa)=A\left(1+\kappa\ip{\du}{R_\kappa\du}\right)=A\left(1-\kappa \sum_{\nu\in S}\frac{\nu}{\nu-\kappa}\right),
\end{equation}
so that,  as before, $L\big|_{F_1}$  has simple relevant spectrum.

Now, let $\overline{\lambda'}$ be a solution of \eqref{f1}=0  then
there is at most one value of $\tau$, s.t. it
is also a solution of \eqref{f2}=0. 
So, excluding at most $|S|$  values
of $\tau$ one gets that each relevant eigenvalue ${\lambda'}$ has
multiplicity 1.

It follows that
for this choice of $u,v$ all the eigenvalues have multiplicty one and
therefore the discriminant is different from zero and therefore is a
nontrivial analytic function. 
\qed

\section{Lax pair for the generating function}\label{lax_pair_generating}
We consider the Hamiltonian {vector field} associated to the generating
function  $\dis(\kappa)= U(\kappa)V(\kappa)
-\kappa Q^2(\kappa)$, where the functions $U,V,Q$ are defined in
\eqref{varie} (see \eqref{bir.1}).

\begin{lemma}
  \label{Xu.1}
  The Hamilton equations of $\dis(u,v,\kappa)$ are given by
  \begin{equation}\label{Xu}
	\begin{cases}
		\dot{u}= X_u:=   -2 U(\kappa) R_\kappa v^{(2)} -2 \kappa Q(\kappa) R_\kappa u^{(2)} \\
		\dot v = X_v:= 2 \kappa V(\kappa) R_\kappa u^{(2)} + 2 U (\kappa) A^2 u^{(2)} +2 \kappa Q(\kappa) R_\kappa v^{(2)} 
	\end{cases}
\end{equation}
\end{lemma}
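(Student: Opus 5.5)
We compute the $L^2$ gradients of $U$, $V$, $Q$ with respect to $u$ and $v$, apply the Leibniz rule to $\dis=UV-\kappa Q^2$, and then write the equations with the Darboux convention fixed by \eqref{hamiltonian1int}. Since KiPo reads $\dot u=v$, $\dot v=-A^2u^{(2)}$, the convention is $\dot u=\nabla_v H$, $\dot v=-\nabla_u H$. As a check, for $H$ in \eqref{hamiltonian1int} one has $\nabla_u\bigl(-\tfrac12 A^{-1}\bigr)=\nabla_u\bigl(-\tfrac12(1+\langle u,D^2u\rangle)\bigr)\cdot$(chain rule) $=A^2 D^2u$, so $\dot v=-A^2u^{(2)}$. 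All operators $D^k$ and $R_\kappa$ are self-adjoint and commute, and this is used throughout.

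\textbf{Gradients of $U,V,Q$.}
\begin{itemize}
\item For $U=A^{-1}-\langle u^{(2)},R_\kappa u^{(2)}\rangle$, with $A^{-1}=1+\langle u,D^2u\rangle$, we get $\nabla_u U=2u^{(2)}-2D^2R_\kappa u^{(2)}$. Since $D^2R_\kappa=I+\kappa R_\kappa$, this simplifies to $\nabla_uU=-2\kappa R_\kappa u^{(2)}$, consistent with \eqref{identitaU}. Also $\nabla_vU=0$.
\item For $V=A-\langle v^{(1)},R_\kappa v^{(1)}\rangle$ we get $\nabla_vV=-2R_\kappa v^{(2)}$. The $u$-dependence enters only through $A$, and $\nabla_u A=-A^2\nabla_u(A^{-1})=-2A^2u^{(2)}$, so $\nabla_uV=-2A^2u^{(2)}$.
\item For $Q=\langle u^{(1)},R_\kappa v^{(1)}\rangle$ we get $\nabla_uQ=R_\kappa v^{(2)}$ and $\nabla_vQ=R_\kappa u^{(2)}$.
\end{itemize}

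\textbf{Assembling the equations.} We have $\nabla\dis=V\nabla U+U\nabla V-2\kappa Q\nabla Q$. This gives
\[
\dot u=\nabla_v\dis=-2UR_\kappa v^{(2)}-2\kappa QR_\kappa u^{(2)},
\]
which matches $X_u$. It also gives
\[
\dot v=-\nabla_u\dis=2\kappa VR_\kappa u^{(2)}+2UA^2u^{(2)}+2\kappa QR_\kappa v^{(2)},
\]
which matches $X_v$.

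\textbf{Main obstacle.} There is no conceptual difficulty here. The only delicate points are:
\begin{itemize}
\item fixing the sign convention against \eqref{hamiltonian1int};
\item not forgetting that $V$ depends on $u$ through $A$, which produces the term $2UA^2u^{(2)}$;
\item using the resolvent identity $D^2R_\kappa=I+\kappa R_\kappa$ to simplify $\nabla_uU$.
\end{itemize}
For the gradients to make sense, $(u,v)$ is taken regular enough, e.g.\ in $\cH^2$ or better, and $\kappa\notin\sigma(-\dg)$.
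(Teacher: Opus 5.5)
Your proof is correct and follows the paper's argument exactly: the same gradients of $A$, $U$, $V$, $Q$ (including the $u$-dependence of $V$ through $A$), combined by the Leibniz rule for $\dis=UV-\kappa Q^2$ with the convention $X_u=\nabla_v\dis$, $X_v=-\nabla_u\dis$. There is one small slip in your sign-convention check, which does not affect the proof: the potential term in \eqref{hamiltonian1int} is $-\tfrac12 A=-\tfrac12(1+\langle u,D^2u\rangle)^{-1}$, not $-\tfrac12 A^{-1}$, although the gradient you state for it, $A^2u^{(2)}$, is the right one.
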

\begin{proof}
	We recall that the Hamiltonian vector field associated to the standard Darboux $2$-form is 
	\[
	X_u = \nabla_v \dis \,,\quad X_v = -\nabla_u \dis
	\]
	where $\nabla$ is the $L^2$ gradient.
	By direct computation
	\[
\begin{aligned}
		&\nabla_u  A \;\;\;\;\;=- 2 A^2 \duu \,,\qquad\quad	\nabla_v  A \, = 0\,,\\  &\nabla_u U(\kappa) =-2 \kappa  R_\kappa u^{(2)} \,, \quad \nabla_v U(\kappa) = 0
	\,,	\\
		&\nabla_u V(\kappa) = -2 A^2 \duu \,, \quad\;\;\,	\nabla_v V(\kappa) = -2 R_\kappa v^{(2)}\,, \\ &  \nabla_u Q(\kappa) =   R_\kappa v^{(2)} \,, \quad\qquad \nabla_v Q(\kappa) = R_\kappa u^{(2)}
\end{aligned}
	\]
		Then substituting in \eqref{bir.1} we obtain the result.
\end{proof}


Define $\beta=
    \dfrac{\sqrt{A}}{1+\sqrt{A}}$, we have the following proposition.
\begin{lemma}
	\label{thm:A}
	For any $(u,v)\in \cH^s$  with $s$ large enough, one has 
	\begin{equation}\label{eq:PL}
		dL(u,v)[(\Xu,\Xv)]=[\Bt(\kappa),L].
	\end{equation}
	where $\Bt$ is a skew-adjoint finite rank operator defined as
	\begin{align}
		\Bt(\kappa)&:=2\alpha U\,\du\wedge\dv
		-2\kappa\beta\,\du\wedge\Rk\dv
		-2\kappa^2\,\Rk\du\wedge\Rk\dv, \label{eq:Bt}
	\end{align}
\end{lemma}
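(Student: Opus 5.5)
We mimic the proof of Theorem \ref{Teomain}: use $L=SL_1S$ from \eqref{glg1} and reduce the claim to an identity for $L_1$ conjugated by $S$. For a generic derivation $\delta:=dL(u,v)[(\Xu,\Xv)]$ we write
\begin{equation*}
\delta L-[\Bt,L]=(\delta S-\Bt S)L_1S+S\,\delta L_1\,S+SL_1(\delta S+S\Bt).
\end{equation*}
Since $\Bt$ is skew-adjoint and $S$, $L_1$ are self-adjoint, the third term is the adjoint of the first. So it suffices to express $\delta S-\Bt S$ as $S\,T$, with $T$ an explicit finite rank operator, and then check that $T L_1 + L_1T^* + \delta L_1=0$.

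\textbf{Step 1: variations of the basic quantities.} From \eqref{Xu} we compute $\delta\du=D\Xu$ and $\delta\dv=D\Xv$. We also need $\delta A=-2A^2\ip{\duu}{\Xu}$ and, as in Lemma \ref{lemmaboh1}, $\delta\alpha=-\alpha(\alpha+A)\ip{\du}{D\Xu}$. We will rewrite $D\Xu$ and $D\Xv$ in terms of the vectors $\du$, $\dv$, $\Rk\du$, $\Rk\dv$ and $\duuu$, using $D^2\Rk=I+\kappa\Rk$. This identity is what makes the vectors $\Rk\du$, $\Rk\dv$ appear in $\Bt$.

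\textbf{Step 2: the analogue of Lemma \ref{lemmaboh1}.} We compute $\delta S=-\delta\alpha\,\ketbra{\du}{\du}-\alpha(\ket{D\Xu}\bra{\du}+\ket{\du}\bra{D\Xu})$ and $\Bt S$. We expect
\begin{equation*}
\delta S-\Bt S=S\,T,\qquad T=\sum_i \ketbra{a_i}{b_i},
\end{equation*}
with $a_i,b_i$ in the span of $\du,\dv,\Rk\du,\Rk\dv$. The coefficients $\beta=\sqrt A/(1+\sqrt A)$ and $\alpha$ in \eqref{eq:Bt} should be exactly those needed, via $S^{-1}=I+\beta\ketbra{\du}{\du}/\norm{\du}^2\cdot(\dots)$ (equivalently $S^{-1}\du=A^{-1/2}\du$), to factor $S$ out on the left. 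Concretely, we will apply $S^{-1}$ to $\delta S-\Bt S$ and read off $T$ using $S\du=\sqrt A\,\du$.

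\textbf{Step 3: the core identity.} We compute
\begin{equation*}
\delta L_1=-\delta(A^{-1})\ketbra{\dv}{\dv}-A^{-1}(D\Xv)\vee\dv,
\end{equation*}
and $TL_1+L_1T^*$. For the latter we use $\bra{\du}L_1=\bra{\duuu}-A^{-1}\tq_2\bra{\dv}$, $\bra{\Rk f}D^2=\bra{f}+\kappa\bra{\Rk f}$, and $\bra{\dv}L_1=\bra{\dvuu}-A^{-1}\tb_2\bra{\dv}$. Everything is then expanded in symmetric products $x\vee y$ with $x,y$ ranging over $\du,\dv,\duuu,\dvuu,\Rk\du,\Rk\dv$ and possibly $\Rk\duuu$. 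We collect coefficients using the scalar identities \eqref{identitaU}, $V=A-\ip{\dv}{\Rk\dv}$ and $\ip{\duu}{\Rk\duu}=\ip{\du}{\du}+\kappa\ip{\du}{\Rk\du}$, and verify that each coefficient vanishes.

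The main obstacle is this last bookkeeping step. Coefficients of the form $U$, $V$, $Q$ from \eqref{Xu} must cancel against the scalars $\ip{\du}{\Rk\dv}$ and similar ones arising from $\bra{\Rk\du}L_1$; in particular the term $-2\kappa^2\Rk\du\wedge\Rk\dv$ must exactly absorb the $\kappa$-dependent pieces. To keep this manageable, we will first verify the identity on $\mathrm{span}(\du,\dv)$ and its orthogonal complement separately. If direct matching fails, we will use a fallback: expand in $\kappa^{-1}$ as $\kappa\to\infty$ and compare with Lemma \ref{Laxgen} (the leading order recovers the KiPo case, consistent with the formula for $H$ as a limit of $J$), then conclude by analyticity in $\kappa$ away from $\sigma(-\dg)$.
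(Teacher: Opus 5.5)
Your plan is essentially the paper's proof. Your $T$ is $-B_0$ from \eqref{eq:B0}, and your Step 2 is \eqref{eq:BPbis}; the paper checks $SN=PS$ through the coefficients $c_1,\dots,c_4$ rather than applying $S^{-1}=I+\beta\ketbra{\du}{\du}$ directly. Your core identity $TL_1+L_1T^*+\delta L_1=0$ is exactly \eqref{eq:Ncong}, and it does close by the direct bookkeeping you describe, with only $\du,\dv,u^{(3)},\Rk\du,\Rk\dv$ appearing.

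The fallback is unnecessary, and as stated it would not work. The KiPo case corresponds to $\kappa=0$ (i.e.\ $\lambda\to\infty$), not $\kappa\to\infty$. Moreover, matching a single order of the expansion cannot give the identity for all $\kappa$, and in the paper the order-by-order Lax pairs for the $H_k$ are themselves derived from this lemma.
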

\begin{remark}
  \label{lie.derivative}
The l.h.s. of Eq. \eqref{eq:PL} is the Lie derivative of $L$ as a
function of $(u,v)$ with respect to the Hamiltonian vector field
\eqref{Xu} of the perturbative determinant $\dis$, thus one
immediately gets that along the solutions of the vector field
\eqref{Xu}, the standard Lax equations are fulfilled, namely that one
has
$$
\dot L=[\Bt(\kappa),L].
$$
{However if $\kappa$ is not real then $\Bt(\kappa)$ in general is not
skew-adjoint, therefore the spectrum of $L$ is preserved only for real
values of $\kappa$.}
\end{remark}
\begin{proof}

The proof of \eqref{eq:PL}
  is essentially a direct computation of the Lie derivative of $L= S
  L_1S$ (see \eqref{glg1}).	For every $\xi\in H^1$, by definition
  \begin{align}
\nonumber
		dS(u)[\xi]& = - d\alpha(u)[\xi] \ketbra{\du}{\du} -\alpha(u) (D\xi)\vee \du
		\\
                \nonumber
		&= \alpha(u)(\alpha(u)+A(u)) \braket{\du}{D \xi} \ketbra{\du}{\du}-\alpha(u) (D\xi)\vee \du
	        \\
                \nonumber
			&= \alpha(D\xi\wedge\du)\,S-A\,S\ket{D\xi}\bra{\du}
		\\
                \label{eq:dPb}
			&=-S\,\alpha(D\xi\wedge\du)-A\ket{\du}\bra{D \xi}S
  \end{align}
The equality in the last line arises from taking the adjoint on the
previous line and recalling that $S$ is self-adjoint while
$D\xi\wedge\du$ is skew-adjoint.
	Evaluating \eqref{eq:dPb} at $\xi= \Xu$ we obtain 
		\[
	dS[\Xu]=\alpha\,(D\Xu\wedge\du)\,S-A\,S\ket{D\Xu}\bra{\du},
	\]
	We now prove that
	\begin{equation}\label{eq:BPbis}
		dS[\Xu]=\Bt S-SB_0=-S\Bt-B_0^{*}S .
	\end{equation}
	where
	\begin{equation}
		B_0(\kappa):=-2\kappa\ket{\du}\bra{\Rk\dv}-2AU\ket{\dv}\bra{\du}
		-2\kappa^2\,\Rk\du\wedge\Rk\dv\,, \label{eq:B0}
	\end{equation}
Actually, we only prove the first equality of \eqref{eq:BPbis}, the second one follows from the fact that $S$ is self-adjoint and so is $dS[\Xu]$, while $\tilde B$ is anti self-adjoint.\\
	We start by noting that   proving the identity  \eqref{eq:BPbis} is equivalent to proving that  
		\begin{equation*}
			S N = P S,\quad\mbox{with}\quad
			N:=B_0-A\ket{D\Xu}\bra{\du},\quad 
			P:=\Bt-\alpha\,D\Xu\wedge\du .
		\end{equation*}
			Since $D\Rk v^{(2)}=\dv+\kappa\Rk\dv$, and analogously for $u$, \eqref{Xu} gives
		\begin{equation}\label{eq:DXu}
			D\Xu=-2U\,\dv-2\kappa U\,\Rk\dv-2\kappa Q\,\du-2\kappa^2Q\,\Rk\du .
		\end{equation}
	Thus
		\[
		\begin{aligned}
			P=&-2\kappa\Bigl[(\beta+\alpha U)\,\du\wedge\Rk\dv+\kappa\,\Rk\du\wedge\Rk\dv+\alpha\kappa Q\,\du\wedge\Rk\du\Bigr],\\
			N=&-2\kappa\Bigl[\ket{\du}\bra{\Rk\dv}-AU\ket{\Rk\dv}\bra{\du}-AQ\ket{\du}\bra{\du}\\
			&-\kappa AQ\ket{\Rk\du}\bra{\du}+\kappa\,\Rk\du\wedge\Rk\dv\Bigr].
		\end{aligned}
		\]

		Recalling that 
			\[
		S\Rk\dv=\Rk\dv-\alpha Q\,\du ,\qquad S\Rk\du=\Rk\du-\alpha\braket{\du}{\Rk\du}\,\du ,
		\]  we get
%
	\[
	\begin{aligned}
		&SN-PS=\\ &-2\kappa\Bigl[\;c_1\ket{\du}\bra{\Rk\dv}+c_2\ket{\Rk\du}\bra{\du}+c_3\ket{\Rk\dv}\bra{\du}
		+c_4\ket{\du}\bra{\du}\Bigr],
	\end{aligned}
	\]
	where 
	\[
	\begin{aligned}
		c_1&=\sqrt A-\alpha\kappa\braket{\du}{\Rk\du}-(\beta+\alpha U),\qquad
		c_2=\kappa Q\bigl(\alpha(1+\sqrt A)-A\bigr),\\
			c_3&=-AU-\alpha\kappa\braket{\du}{\Rk\du}+\sqrt A\,(\beta+\alpha U),\\
		c_4&=AQ(\alpha-\sqrt A)+\alpha Q\bigl(\beta+\alpha U+\alpha\kappa\braket{\du}{\Rk\du}\bigr).
	\end{aligned}
	\]
	
Direct algebraic manipulations following from \eqref{varie},
\eqref{eq:def-pq} and using $\beta= \dfrac{\sqrt{A}}{1+\sqrt{A}}$, show that $c_1=c_2=c_3=c_4=0$.
		This proves  the first equality in \eqref{eq:BPbis}.

	Similarly, using
	$d(A^{-1})[\xi]=2\langle\du,D \xi\rangle$ we have
	\begin{equation}\label{eq:dN}
		dL_1[(\xi,\eta)]=- 2  \langle\du,D\xi\rangle\,\ketbra{\dv}{\dv}- A^{-1}\,\bsym{D\eta}{\dv}\,.
	\end{equation}
	As before we compute the differential at $(\Xu,\Xv)$:
	\[
	dL_1[(\Xu,\Xv)]=\bsym{\dv}{\Bigl[-A^{-1}D \Xv-\langle\du,D \Xu\rangle\dv\Bigr]}.
	\]
Inserting
	\[
	\begin{aligned}
		-A^{-1}D\Xv&-\langle\du,D\Xu\rangle\dv\\
		&	=-2AU\,\duuu-\frac{2\kappa V}{A}\,(\du+\kappa\Rk\du)
		-\frac{2\kappa^2Q}{A}\,\Rk\dv+2U\mathtt q_2\,\dv ,
	\end{aligned}
	\]
	and grouping the terms together, we obtain
	\begin{equation}\label{eq:Ncong}
		dL_1[(\Xu,\Xv)]=B_0L_1+L_1B_0^{*}.
	\end{equation}

        Recalling \eqref{eq:definition-L11} and \eqref{glg1} we have that 
	\[
	dL[(\Xu,\Xv)]=dS[\Xu]\,L_1 S+ S\,dL_1[(\Xu,\Xv)]\,S+SL_1\,dS[\Xu].
	\]
	Substituting
	\[
	\begin{aligned}
		dL[\Xu,\Xv]&=(\Bt S-SB_0)L_1S+S(B_0L_1+L_1B_0^{*})S-SL_1(P\Bt+B_0^{*}S)
		\\
		&=\Bt SL_1S- SL_1S\Bt .
	\end{aligned}
	\]
	The four terms containing $B_0$ cancel in pairs, and the right hand side is
	$[\Bt,L]$.
\end{proof} 
\begin{remark}
		Let us note  that, substituting $\kappa=0$ in
                \eqref{eq:Bt} and using \eqref{identitaU}, we recover
                the operator $B$ of the KiPo flow, namely \eqref{eq:LB} (apart from a trivial factor $-2$).
	\end{remark}
	\begin{proof}[Proof of Theorem \ref{BKAPPA}]
		For $N\in\N$ let us consider the set $S_N:=\{\nu\in\sigma(-\Delta_g):\,\nu\leq N \}$. \\
		We denote by $(u_N,\,v_N)$ the projection of $(u,\,v)\in\mathcal H^{1}$ on $V_{S_N}\oplus V_{S_N} $ and by $L_{S_N},\,\Bt_N(\cdot), \dis_N(\cdot)$ 
		the functions $L$, $\Bt(\cdot),\dis(\cdot)$, evaluated at $(u_N,\,v_N)$.\\
		By construction, for any $(u_N,v_N)$ the functions  $\lambda\mapsto \Bt_N(\lambda^{-1})$ is  holomorphic in a neighborhood of $\lambda=0$.  Thus 
		\[
		\Bt_N(\lambda^{-1}) =\sum_{k\ge 2} \lambda^{k-1} \Bt_N^{(k)} \,,\qquad  \Bt_N^{(k)}:= \frac{1}{(k-1)!} \frac{d^{k-1}}{d\lambda^k} \Bt_N(\lambda^{-1})\vert_{\lambda=0}\,.
		\]
		One can readily verify that $B_N^{(k)}= 2 B^{(k)}(u_N,v_N)$
		where $B^{(k)}$, defined in \eqref{Bm},
		is well defined and analytic for $(u,v)\in \cH^s$ with $s$ large enough.
		\\
		Similarly $\dis_N(\lambda^{-1})$ is  holomorphic in a neighborhood of $\lambda=0$ and
		\[
		\dis_N(\cdot)= 1+2\sum_{k\geq 2} \lambda^{k-1} H_{k,N} \,,\qquad \mbox{where}\quad  H_{k,N} = H_k(u_N,v_N)\,.
		\]
Furthermore, since $V_{S_N}\times V_{S_N}$ is invariant for the flow
of $\Delta$
		\[
		(\Xu,\Xv)(u_N,v_N) = 2\sum_{k\geq 2} \lambda^{k-1} X_{H_{k,N} } ,\quad \mbox{with}\quad  X_{H_{k,N} } (u_N,v_N) = X_{H_k}(u_N,v_N)\,.
		\]
In conclusion both sides of \eqref{eq:PL} restricted to $V_{S_N}\times
V_{S_N}$ are holomorphic in a neighborhood of zero and one has
		\begin{equation*}
 2 dL_{S_N}[\sum_{k\geq 2} \lambda^{k-1} X_{H_{k,N} }]=[\sum_{k\ge 2} \lambda^{k-1} \Bt_N^{(k)},L_{S_N}]
		\end{equation*}
So that for any $(u,v)\in V_{S_N}\oplus V_{S_N}$
		\[
		dL(u,v)[X_{H_k}(u,v)] = [B^{(k)},L] (u,v)
		\]
		Finally, since $dL[X_{H_k}],\,B^{(k)}$ are analytic operator valued maps for $(u,v)\in \mathcal H^{s}$ with $s$ sufficiently large , we can conclude by taking the limit for $N\to\infty$ that
		\[
		dL(u,v)[X_{H_k}(u,v)] = [B^{(k)},L] (u,v)\,,\qquad (u,v)\in \mathcal H^{s}\,.
		\]
	\end{proof}

\section{Poisson commutation of the relevant eigenvalues and of the
  functions $H_k$.}\label{commutano}

The main result of this section is a new proof of the fact that the
functions \eqref{def I}  are pairwise Poisson commuting.
This will follow from a Lemma on the Poisson commutativity of the
relevant eigenvalues, that we are now going to state and prove.
First, for a finite $N$ consider (as in the proof of Lemma
\ref{finite gap}) the set $S_N:=\sigma'(-\dg)\cap [0,N]$ and the set
$\cU_{S_N}$ on which the relevant spectrum $\sigma'(L_{S_N})$ is
simple. Denote {$N':=\sum_{\nu\in S_N}\min(n_\nu,2)$ }then, according to
Lemma \ref{finite gap} each relevant eigenvalue
$\left\{\lambda'_n(u,v)\right\}_{n=1}^{N'}$ is simple and also
an analytic function of $(u,v)$.

We have the following Lemma. 

\begin{lemma}
  \label{commutiamo}
For all $j,k\leq N'$, let $\lambda'_{j},\lambda'_k\in\sigma'(L_{S_N})$
be relevant eigenvalues, then one has
\begin{equation}
  \label{davvero}
\left\{\lambda'_{j};\lambda'_{k} \right\}=0\ ,\quad \forall (u,v)\in
\cU_{S_N}. 
  \end{equation}
\end{lemma}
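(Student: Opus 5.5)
We prove \eqref{davvero} by combining the Lax pair for the generating function (Lemma \ref{thm:A}) with the product formula \eqref{rilevanti} restricted to $V_{S_N}$. Work on $V_{S_N}\times V_{S_N}$, which is invariant both under $L$ and under the Hamiltonian flows of $\dis(\kappa)$ and of the $\lambda'_n$, since these functions are invariant under the natural symmetry. There $\dis_N(\kappa)=\det(L_{S_N}-\kappa)/\det(D^2|_{V_{S_N}}-\kappa)$, and after cancelling the common factors $\prod_{\nu}(\nu-\kappa)^{(n_\nu-2)_+}$ we get
\[
\dis_N(\kappa)=\prod_{n=1}^{N'}\frac{\lambda'_n-\kappa}{\nu'_n-\kappa}.
\]
Here the $\nu'_n$ are constants, so the phase-space dependence is carried by the $\lambda'_n$ alone.

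\textbf{Step 1: $\lambda'_j$ Poisson commutes with $\dis_N(\kappa)$.} By Lemma \ref{thm:A} and Remark \ref{lie.derivative}, for real $\kappa\notin\sigma(-\dg)$ the Lie derivative of $L$ along $X_{\dis(\kappa)}$ equals $[\Bt(\kappa),L]$ with $\Bt(\kappa)$ skew-adjoint. Hence the flow of $\dis(\kappa)$ is isospectral for $L_{S_N}$. On $\cU_{S_N}$ each $\lambda'_j$ is simple and analytic by Lemma \ref{finite gap}, so with $\psi_j$ a normalized eigenvector the Hellmann--Feynman formula gives
\[
\{\lambda'_j,\dis_N(\kappa)\}=\langle\psi_j,[\Bt,L]\psi_j\rangle=0 .
\]
This holds for real $\kappa$ off the poles. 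Both sides are meromorphic in $\kappa$, so the identity holds for all $\kappa$. One checks that the restriction commutes with taking Hamiltonian vector fields, since $\dis$ restricted to $V_{S_N}$ has the restricted vector field there.

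\textbf{Step 2: extracting $\lambda'_k$.} By the Leibniz rule,
\[
\{\lambda'_j,\dis_N(\kappa)\}=\dis_N(\kappa)\sum_{n}\frac{\{\lambda'_j,\lambda'_n\}}{\lambda'_n-\kappa}=0 .
\]
Hence the rational function $\sum_n c_n/(\lambda'_n-\kappa)$, with $c_n=\{\lambda'_j,\lambda'_n\}$, vanishes identically in $\kappa$. Since the relevant eigenvalues are pairwise distinct on $\cU_{S_N}$, the poles are distinct, and therefore every residue $c_n$ vanishes. This gives \eqref{davvero}.

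The main obstacle is justifying that cancellation. One must show that a relevant eigenvalue $\lambda'_n$ never coincides with some $\nu'_m$, since a coincident pole could hide a residue. On the open dense set one can additionally require $\lambda'_n\notin\sigma(-\dg)$, as stated at the start of Section \ref{simple}; alternatively, residues at fixed constants $\nu'_m$ do not interfere because the $\lambda'_n$ vary. Where needed, we would shrink $\cU_{S_N}$ and conclude by continuity and density.
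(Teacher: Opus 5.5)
Your proposal is correct and is essentially the paper's proof: invariance of the spectrum of $L$ under the $\dis(\kappa)$-flow gives $\{\lambda'_j,\dis(\kappa)\}=0$, and expanding through the product formula \eqref{rilevanti} and using simplicity of the relevant spectrum kills every $\{\lambda'_j,\lambda'_n\}$ (the paper clears denominators and evaluates at $\kappa=\lambda'_i$, while you read off residues, which amounts to the same thing). The obstacle you flag in Step 2 is not real, because after dividing by the nonzero rational function $\dis_N$ (which tends to $1$ at infinity) the remaining sum has poles only at the $\lambda'_n$, so coincidences with the $\nu'_m$ play no role; requiring $\lambda'_j\notin\sigma(-\dg)$ matters only in your Hellmann--Feynman step, since simplicity in $\sigma'(L_{S_N})$ does not by itself make the eigenspace of $L_{S_N}$ one-dimensional, and this is handled either by your shrinking-and-density remark or, as the paper does, by noting that isospectrality makes the $\lambda'_n$ constant along the flow directly.
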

\proof We recall that  $\lambda'_{j};\lambda'_{k}$ are analytic  in $\cU_{S_N}$ and constant along the flow of $\dis(\kappa)$ by Theorem \ref{thm:A}. Hence  one has
$$
\left\{\lambda'_k,\Delta(\kappa)\right\}=0 \ ,\quad \forall (u,v)\in
\cU_{S_N}\ ,\quad \forall \kappa\not\in\sigma'(-\dg)\cap[0,N].
$$
Taking into account equation \eqref{rilevanti}, one has 
\begin{align}
  \label{}
0=&\left\{\lambda'_k,\Delta(\kappa)\right\}= \sum_{j}
\left\{\lambda'_k,\lambda'_j \right\}
\frac{1}{\nu'_j-\kappa}\prod_{m\not=j}\frac{\lambda'_m-\kappa}{\nu'_m-\kappa}\\
= &\left[\prod_{i\in S} \frac{1}{\nu'_i-\kappa}\right] \sum_j {	\left\{\lambda'_k,\lambda'_j \right\}}  \prod_{m\not=j}{(\lambda'_m-\kappa)}
\end{align}
Ignoring the denominator and evaluating this equality at $\kappa=\lambda'_i$,   for $i=1,\dots,N'$, one immediately gets
\[
{	\left\{\lambda'_k,\lambda'_i \right\}}  \prod_{m\not=i}{(\lambda'_m-\lambda'_i)}=0
\]
hence the thesis. \qed

\noindent {\it Proof of Theorem \ref{ridimostro}} We first proof Eq.\eqref{ridi} for $(u,v)\in \cU_{S_N}$. Here
we have that, since $\Delta$ is analytic at infinity, the functions
$H_k$ are actually the coefficients of the expansion of $\Delta$ at
infinity, so that they depend on the variables $(u,v)$ only through
the relevant eigenvalues. With a slight abise of notation we write
$H_{k}(u,v)=H_k(\lambda'(u,v))$. Thus we have
$$
\left\{H_{k_1},H_{k_2}\right\}=\sum_{l_1,l_2}\frac{\partial
  H_{k_1}}{\partial\lambda'_{l_1}} \frac{\partial
  H_{k_2}}{\partial\lambda'_{l_2}}
\left\{\lambda'_{l_1},\lambda'_{l_2}\right\}=0,\quad\forall (u,v)\in
S_N. 
$$
But the l.h.s. is analytic and therefore, by density one immediately
extends such an equality to $V_{S_N}$ and by the density of $V_{S_N}$
one gets the result on the whole space where the both $H_{k_1}$ and
$H_{k_2} $ are analytic. This gives the thesis. \qed

\appendix 

\section{Perturbative determinant and multiplicity of the spectrum}\label{kato}

Let $L_0$ be a closed operator on a Hilbert space $\cH$, and let $P$
be a finite rank perturbation with range contained in the domain of
$L_0$. For any $\kappa\in\rho(L_0)$ (the resolvent of $L_0$), one is interested in the
spectrum of $L:=L_0+P$. We define the perturbative
determinant by
\begin{equation}
\label{WA}
\omega(\kappa,L_0,P):=det(I+PR_\kappa)\ ,
\end{equation}
where $R_\kappa:=(L_0-\kappa)^{-1}$ is the resolvent of
$L_0$ and, according to Eq. III (4.11) of \cite{kato} the determinant
is defined as the determinant of the matrix obtained by restricting
$I+PR_\kappa$ to the range of $P$. Note that, since $P$ is finite rank, this is also the Fredholm determinant of $I+PR_\kappa$.\\

Following \cite{kato} we define the
multiplicity function $\tilde \imath (\kappa, L)$ of 
$L$ by
\begin{equation}
	\label{multi}
	\tti(\kappa,L):=
	\left\{
	\begin{matrix}
		0 & \text{if} & \kappa\in\rho(L)
		\\
		n & \text{if} & \kappa\in\sigma_{pp}(L)\ \text{has\ multiplicity\ }n
		\\
		\infty & \relax & \text{otherwise}
	\end{matrix}
	\right.
\end{equation}
and, given a meromorfic function $f$ on a domain $\cD$, we define a
multiplicity function $\ti(\kappa,f) $ by 
\begin{equation}
	\label{multi.1}
	\ti(\kappa,f):=
	\left\{ 
	\begin{matrix}
		n & \text{if} & \kappa\ \text{is\ a\ zero\ of \ order}\ n
		\\
		-    n & \text{if} & \kappa\ \text{is\ a\ pole \ of \ order}\ n
		\\
		0 & \relax & \text{otherwise}
	\end{matrix}
	\right.
\end{equation}
then one has
the following theorem.

\begin{theorem}
	\label{theo.kato}[Theorem 6.2, p. 247 \cite{kato}]
	Let $L_0$ and $P$ be as above, let $\omega(\kappa)=
	\omega(\kappa,L_0,P)$ be the associated perturbative determinant. If $\cD$
	is a domain of the complex plane consisting of points of $\rho(L_0)$ 
	and of isolated eigenvalues of $L_0$ with finite multiplicities, then $\omega(\kappa,L_0,P)$ is meromorphic
	in $\cD$ and we have for $L=L_0+P$
	\begin{equation}
		\label{WA.A1}
		\tti(\kappa,L)=\tti(\kappa,L_0)+\ti(\kappa,\omega(\cdot,L,P))\ .
	\end{equation}
\end{theorem}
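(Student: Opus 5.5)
The statement is the classical result of Kato; we sketch how we would prove it. Throughout, multiplicity means algebraic multiplicity, i.e.\ the rank of the Riesz projection; for the self-adjoint operators of the paper this equals the geometric multiplicity. The plan has three steps: reduce $\omega$ to a finite-dimensional determinant, relate its logarithmic derivative to a trace of resolvent differences, and count multiplicities by contour integration.

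\emph{Step 1: meromorphy.} Let $F$ be the (finite-dimensional) range of $P$ and choose a basis $f_1,\dots,f_r$ of $F$. Write $P=\sum_i \ket{f_i}\bra{g_i}$ for suitable functionals $g_i$. Then $\omega(\kappa)=\det\big(\delta_{ij}+\langle g_i, R_\kappa f_j\rangle\big)_{i,j}$. On $\rho(L_0)$ the resolvent is holomorphic. Near an isolated eigenvalue $\kappa_0$ of finite multiplicity, the Laurent expansion of $R_\kappa$ has a singular part that is a finite sum of finite-rank terms (Riesz projection plus nilpotent part), so it has a pole of finite order. Hence each matrix entry, and therefore $\omega$, is meromorphic on $\cD$.

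\emph{Step 2: resolvent identity and invertibility.} For $\kappa\in\rho(L_0)$ we have $L-\kappa=(I+PR_\kappa)(L_0-\kappa)$. It follows that $L-\kappa$ is invertible if and only if $I+PR_\kappa$ is, which happens exactly when $\omega(\kappa)\neq0$, since $I+PR_\kappa$ acts as the identity modulo $F$. In that case
$$
(L-\kappa)^{-1}=R_\kappa(I+PR_\kappa)^{-1}.
$$
If $\omega\equiv0$ on $\cD$, both sides of \eqref{WA.A1} are $\infty$ on $\cD\cap\rho(L_0)$; we would treat this degenerate case separately. Otherwise the zeros and poles of $\omega$ are isolated. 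Consequently $\sigma(L)\cap\cD$ consists of isolated points.

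Next we differentiate, using Jacobi's formula for the finite-rank perturbation $PR_\kappa$ and $\frac{d}{d\kappa}R_\kappa=R_\kappa^2$:
$$
\frac{\omega'(\kappa)}{\omega(\kappa)}=\Tr\big[(I+PR_\kappa)^{-1}PR_\kappa^2\big].
$$
By the second resolvent identity,
$$
(L-\kappa)^{-1}-R_\kappa=-R_\kappa(I+PR_\kappa)^{-1}PR_\kappa .
$$
This difference is finite rank, so cyclicity of the trace applies and gives
$$
\frac{\omega'}{\omega}=-\Tr\big[(L-\kappa)^{-1}-R_\kappa\big].
$$

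\emph{Step 3: contour integration.} Fix $\kappa_0\in\cD$ and a small circle $\Gamma$ around it that encloses no other point of $\sigma(L)\cup\sigma(L_0)$ and no other zero or pole of $\omega$. The argument principle gives $\frac1{2\pi i}\oint_\Gamma \omega'/\omega=\ti(\kappa_0,\omega)$. On the other side, $-\frac1{2\pi i}\oint_\Gamma(L-\kappa)^{-1}d\kappa$ and $-\frac1{2\pi i}\oint_\Gamma R_\kappa\, d\kappa$ are the Riesz projections $\Pi_L$ and $\Pi_{L_0}$ at $\kappa_0$. Exchanging trace and integral, which is legitimate because the integrand lives in a fixed finite-dimensional space on $\Gamma$, we obtain
$$
\ti(\kappa_0,\omega)=\Tr(\Pi_L-\Pi_{L_0}).
$$
The difference $\Pi_L-\Pi_{L_0}$ is finite rank. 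Since $\Pi_{L_0}$ has finite rank, so does $\Pi_L$, and the trace of the difference equals $\operatorname{rank}\Pi_L-\operatorname{rank}\Pi_{L_0}=\tti(\kappa_0,L)-\tti(\kappa_0,L_0)$. This is \eqref{WA.A1}.

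The main obstacle is Step 3: justifying that $\Tr(\Pi_L-\Pi_{L_0})=\operatorname{rank}\Pi_L-\operatorname{rank}\Pi_{L_0}$. This needs $\Pi_L$ of finite rank, which follows from $\Pi_L=\Pi_{L_0}+{}$(finite rank), together with $\Tr\Pi=\operatorname{rank}\Pi$ for finite-rank projections. One must also check that the trace-class manipulations remain valid when only the difference of the two operators is finite rank.
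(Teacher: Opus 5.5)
The paper gives no proof of this statement; it simply quotes Kato. Your argument is the standard proof of the Weinstein--Aronszajn formula: Jacobi's formula plus cyclicity give $\omega'/\omega=-\Tr[(L-\kappa)^{-1}-R_\kappa]$, and the argument principle on a small circle then produces $\Tr(\Pi_L-\Pi_{L_0})$. Steps 1--2 are fine. Two justifications in Step 3, however, are not correct as written.

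\emph{The trace--integral exchange.} The integrand $(L-\kappa)^{-1}-R_\kappa=-R_\kappa(I+PR_\kappa)^{-1}PR_\kappa$ does not take values in a fixed finite-dimensional space. Its range lies in $R_\kappa F$, which moves with $\kappa$. What justifies $\Tr\oint_\Gamma=\oint_\Gamma\Tr$ is the following. The integrand is norm-continuous on $\Gamma$ and has rank at most $r=\dim F$. Since $\|X\|_1\le \operatorname{rank}(X)\,\|X\|$, applied to differences of rank at most $2r$, it is continuous in trace norm. Hence the contour integral converges in the trace class.

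\emph{Finite rank of $\Pi_L-\Pi_{L_0}$.} You assert that $\Pi_L-\Pi_{L_0}$ has finite rank and deduce from it that $\Pi_L$ does. But an integral of operators of rank at most $r$ need not have finite rank, so this is unsupported; your closing paragraph restates the claim rather than proving it. The repair is short:
\begin{itemize}
\item $\Pi_L-\Pi_{L_0}$ is trace class, hence compact, and $\Pi_{L_0}$ has finite rank.
\item So $\Pi_L$ is a compact idempotent. Since it is the identity on its closed range, that range is finite-dimensional.
\item Then $\Pi_L-\Pi_{L_0}$ does have finite rank, and $\Tr(\Pi_L-\Pi_{L_0})=\operatorname{rank}\Pi_L-\operatorname{rank}\Pi_{L_0}=\tti(\kappa_0,L)-\tti(\kappa_0,L_0)$.
\end{itemize}
Alternatively, compute the residue directly. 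All the Laurent principal parts involved are finite, and the middle factor $(I+PR_\kappa)^{-1}P$ has range in $F$. So the residue is a finite sum of operators of rank at most $r$.

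With these fixes the proof is complete. Two smaller points:
\begin{itemize}
\item The degenerate case $\omega\equiv0$ works because you read multiplicities as algebraic (ranks of Riesz projections, with $\infty$ at non-isolated spectral points) and use Kato's convention $\ti(\kappa,0)=\infty$. The paper's definition of $\tti$ via $\sigma_{pp}$ would not literally give this.
\item The displayed formula should involve $\omega(\cdot,L_0,P)$, as you used, not $\omega(\cdot,L,P)$.
\end{itemize}
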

This theorem  shows that the function $\omega$ essentially contains all the
informations on the relation between the spectrum of $L_0$ and the spectrum of $L_0+P$.  

We conclude with a representation formula for $\omega$.
If we assume that $L_0$ has compact resolvent, then we have  a spectral representation of the function $\omega$, which
follows from Krein's theory of perturbative determinants and of spectral
shift functions \cite{Krein53,Krein83} (see also~\cite{BirmanYafaev93}).\\
Let the eigenvalues of \(L_0\) and \(L_0+P\), repeated according to
multiplicity and arranged in nondecreasing order, be respectively
\begin{equation}
\nu_0\leq\nu_1\leq\cdots,
\qquad
\lambda_0\leq\lambda_1\leq\cdots.
\end{equation}

\begin{proposition}
	\label{spectral shift}
{Assume $L_0$ selfadjoint and bounded from below. Assume that also $P$
is selfadjoint.}	For \(\kappa\in\rho(L_0)\),
	one has
	\begin{equation}
\omega(\kappa,L_0,P)
		=\prod_{n=0}^{\infty}
		\frac{\kappa-\lambda_n}{\kappa-\nu_n}.
	\label{product-formula}
	\end{equation}
	where the product 
	converges locally uniformly away from the poles.
\end{proposition}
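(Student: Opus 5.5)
The natural route is Krein's spectral shift function. I reduce to eigenvalue counting and then identify the meromorphic function $\omega$ with the product through its zeros and poles plus its behaviour at infinity.

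\emph{Step 1 (zeros and poles).} By Theorem \ref{theo.kato}, applied on $\cD=\C$ (possible since $L_0$ has compact resolvent, so every point of $\C$ is either resolvent or an isolated eigenvalue of finite multiplicity), $\omega$ is meromorphic on $\C$. Its order at each $\kappa$ is $\tti(\kappa,L)-\tti(\kappa,L_0)$, i.e.\ the number of $\lambda_n$ equal to $\kappa$ minus the number of $\nu_n$ equal to $\kappa$. This is exactly the divisor of the formal product $\prod_n(\kappa-\lambda_n)/(\kappa-\nu_n)$.

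\emph{Step 2 (convergence of the product).} Since $P$ is selfadjoint of finite rank $r$, the min-max principle gives the interlacing $\nu_{n-r}\le\lambda_n\le\nu_{n+r}$. Moreover $|\lambda_n-\nu_n|$ is bounded by $\|P\|$ by Weyl's inequality. I write each factor as $1+(\nu_n-\lambda_n)/(\kappa-\nu_n)$. Convergence of $\sum|\nu_n-\lambda_n|/|\kappa-\nu_n|$ is not automatic, so I would use the spectral shift function instead. Let $\xi(t)=\#\{\nu_n<t\}-\#\{\lambda_n<t\}$. By interlacing it is bounded by $r$ in absolute value, and it is integrable against $(t-\kappa)^{-2}$. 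I pair the factors in blocks so that the telescoping is controlled by $\xi$. Then $\log\prod_n\frac{\kappa-\lambda_n}{\kappa-\nu_n}=\int\frac{\xi(t)}{t-\kappa}\,dt$ up to a branch, with the integral absolutely convergent locally uniformly off the spectrum. This gives locally uniform convergence of the product to a meromorphic function $\Pi(\kappa)$ with the same divisor as $\omega$.

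\emph{Step 3 (identification).} The quotient $\omega/\Pi$ is entire and zero-free, so I need growth control. I would use the trace formula $\frac{d}{d\kappa}\log\omega(\kappa)=\Tr\big((L_0+P-\kappa)^{-1}-(L_0-\kappa)^{-1}\big)$. It follows from $\omega(\kappa)=\det\big((L-\kappa)R_\kappa\big)$ by differentiating the finite-dimensional determinant, with $\frac{d}{d\kappa}\log\det(I+PR_\kappa)=\Tr\big((I+PR_\kappa)^{-1}PR_\kappa^2\big)$ and the second resolvent identity. The right-hand side equals $\sum_n\big[(\lambda_n-\kappa)^{-1}-(\nu_n-\kappa)^{-1}\big]$, which is $\frac{d}{d\kappa}\log\Pi$. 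Hence $\omega=c\,\Pi$.

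\emph{Step 4 (the constant).} I would let $\kappa\to-\infty$ along the real axis. There $\|PR_\kappa\|\to0$, so $\omega\to1$. Also $\Pi\to1$, since $\int\xi(t)/(t-\kappa)\,dt\to0$ by dominated convergence. So $c=1$.

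The main obstacle is the justification of the trace identity in Step 3. The difference of resolvents is finite rank, so the trace is its eigenvalue sum, but rearranging that sum into $\sum_n[(\lambda_n-\kappa)^{-1}-(\nu_n-\kappa)^{-1}]$ requires conditional-convergence care. I would handle this with the Lidskii theorem combined with the bounded-$\xi$ representation, writing the trace as $-\int\xi(t)(t-\kappa)^{-2}dt$.
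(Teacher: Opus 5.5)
There is a genuine gap in Step~2, and it resurfaces in Step~4. Interlacing only gives $|\xi|\le r$, and a bounded $\xi$ does not make $\int\xi(t)(t-\kappa)^{-1}\,dt$ convergent, since $\int^{\infty}|t-\kappa|^{-1}dt=\infty$. No grouping of factors can repair this, because everything you use is compatible with divergence.

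Take $\nu_n=n$, $\lambda_n=n+1$. This satisfies the rank-one interlacing $\nu_n\le\lambda_n\le\nu_{n+1}$, a Weyl bound $|\lambda_n-\nu_n|\le1$, and gives $\xi=\mathbf 1_{(0,\infty)}$. Yet $\prod_{n=0}^{N}\frac{\kappa-\lambda_n}{\kappa-\nu_n}=\frac{\kappa-N-1}{\kappa}$ diverges, and the same $\xi$ defeats your dominated-convergence argument for $\Pi\to1$.

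What rules out such configurations is the trace-class information you never use. That is either $\xi\in L^1(\R)$ with $\int\xi=\Tr P$ (Krein), or equivalently $\sum_n|\lambda_n-\nu_n|\le\|P\|_1$. You can get the latter elementarily: apply finite-dimensional Lidskii majorization to $L_0+\Pi_NP\Pi_N$, where $\Pi_N$ projects onto the first $N$ eigenvectors of $L_0$, then use Weyl's inequality and Fatou as $N\to\infty$. With this bound your remark that convergence is ``not automatic'' is wrong. The series $\sum_n|\lambda_n-\nu_n|/|\kappa-\nu_n|$ converges locally uniformly away from $\{\nu_n\}$, which gives Step~2 at once and Step~4 by dominated convergence.

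There are two smaller points in Step~3.

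First, the identity $\Tr\big((L_0+P-\kappa)^{-1}-R_\kappa\big)=-\int\xi(t)(t-\kappa)^{-2}dt$ is Krein's trace formula itself. Lidskii's trace theorem cannot produce it, because the eigenvalues of the resolvent difference are not the numbers $(\lambda_n-\kappa)^{-1}-(\nu_n-\kappa)^{-1}$. Nor can the trace be split, since $R_\kappa$ need not be trace class (already for $-\Delta_g$ when $\dim\cM\ge2$). Once you cite Krein anyway, the paper's route is shorter. It takes the logarithmic version $\ln\omega(\kappa)=\int\xi(t)(t-\kappa)^{-1}dt$, writes $\xi=\sum_nh_n$ with $h_n=\mathbf 1_{[\nu_n,\infty)}-\mathbf 1_{[\lambda_n,\infty)}$, and integrates term by term. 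That makes the divisor matching and the normalization at $-\infty$ unnecessary, though the termwise interchange needs the same summability as above.

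Second, your signs are off. One has $\frac{d}{d\kappa}\log\omega=\Tr\big(R_\kappa(I+PR_\kappa)^{-1}PR_\kappa\big)=\Tr\big((L_0+P-\kappa)^{-1}PR_\kappa\big)=-\Tr\big((L_0+P-\kappa)^{-1}-R_\kappa\big)$, while $\frac{d}{d\kappa}\log\Pi=\sum_n\big[(\kappa-\lambda_n)^{-1}-(\kappa-\nu_n)^{-1}\big]$. The two errors cancel, so $\omega=c\,\Pi$ is unaffected.
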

\proof According to Proposition 2.2 of \cite{yafaev}, the spectral
shift function in the case of operators with compact resolvent can be
defined by 
\begin{equation}
\xi(t)=N_{L_0}(t)-N_{L_0+P}(t),
\label{eq:spectral-shift-counting}
\end{equation}
where
\begin{equation*}
N_T(t):=\Tr\boldsymbol{1}_{(-\infty,t]}(T)
\end{equation*}
then the formula (cf. Eq. (2.4) of \cite{yafaev}) reads
\begin{equation}
\label{lndelta}
\ln\omega(\kappa) =\int_{-\infty}^\infty\frac{\xi(t)}{t-\kappa}dt . 
\end{equation}
Defining
\begin{equation*}
h_n(t):=\mathbf{1}_{[\nu_n,\infty)}(t)
-\mathbf{1}_{[\lambda_n,\infty)}(t),
\end{equation*}
on the one hand one has 
\begin{equation*}
\xi(t)=\sum_{n=0}^{\infty}h_n(t),
\end{equation*}
while on the other 
\begin{equation*}
\int_{\R}\frac{h_n(t)}{t-\kappa}\dd t
=
\int_{\R}
\frac{\boldsymbol{1}_{[\nu_n,\infty)}(t)
	-\boldsymbol{1}_{[\lambda_n,\infty)}(t)}
{t-\kappa}\dd t=\log\frac{\lambda_n-\kappa}{\nu_n-\kappa}.
\end{equation*}
Consequently,
\begin{align}
\int_{\R}\frac{\xi(t)}{t-\kappa}\dd t
=\sum_{n=0}^{\infty}
\log\frac{\lambda_n-\kappa}{\nu_n-\kappa},
\label{eq:integrated-counting-functions}
\end{align}
provided the series is convergent. The convergence of the series is
classical (see e.g. Theorem II of \cite{kato}). \qed

\section{Determinant of finite rank perturbations of the identity}\label{app:rank-one}

{In this section we recall a formula on the determinant of finite rank
perturbations of the identity. A comprehensive theory can be found in \cite{Simon2005,GohbergGoldbergKrupnik2000,GohbergKrein1969,GohbergGoldbergKaashoek1990}.}

Let $\cH$ be a Hilbert space and let $f_1,...,f_k\in\cH$ be indepndent
vectors. Let also be $g_1,...,g_k\in\cH$ be vectors. Consider the
operator
\begin{equation}
  \label{finite.rank}
H:=\Id+\sum_{i=1}^k\ketbra{f_i}{g_i}.
\end{equation}
define the matrix $A$ with matrix elements 
\begin{equation}
  \label{mat.A}
  A_{ij}:=\delta_{i,j}+\langle g_i, f_i\rangle. 
 \end{equation}
\begin{lemma}
  \label{rinite}
  With the above notations
  \begin{equation}
    \label{A.1}
\det(H)=\det(A).
  \end{equation}
\end{lemma}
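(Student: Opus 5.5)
The plan is to compute $\det(H)$ directly from its definition in Appendix \ref{kato}: $H-\Id$ has finite rank, so $\det(H)$ is the determinant of the restriction of $H$ to a finite-dimensional subspace containing the range of $H-\Id$. I read the matrix elements in \eqref{mat.A} as $A_{ij}=\delta_{ij}+\langle g_i,f_j\rangle$; the index $f_i$ in the displayed formula looks like a typo, since with $f_i$ the off-diagonal entries would not involve both vectors.

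First step: fix $F:=\mathrm{span}\{f_1,\dots,f_k\}$. It contains the range of $\sum_i\ketbra{f_i}{g_i}$. It is also invariant under $H$, because $Hf\in f+F$ for every $f$.

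Second step: since the $f_j$ are independent, they form a basis of $F$, and we can write the matrix of $H|_F$ in this basis. One has
\begin{equation*}
Hf_j=f_j+\sum_{i=1}^k f_i\,\langle g_i,f_j\rangle ,
\end{equation*}
so the coefficient of $f_i$ in $Hf_j$ is $\delta_{ij}+\langle g_i,f_j\rangle=A_{ij}$. Hence the matrix of $H|_F$ is exactly $A$, and $\det(H|_F)=\det(A)$.

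Third step: check that the result does not depend on the choice of subspace, so that it agrees with $\det(H)$ as defined via any subspace $W\supseteq F$. Choose a basis of $W$ that extends the basis $f_1,\dots,f_k$. On the added basis vectors $e$, $H$ acts as $e\mapsto e+(\text{element of }F)$. The matrix of $H|_W$ is therefore block upper triangular, with diagonal blocks $A$ and the identity, and its determinant is $\det(A)$. This also matches the Fredholm determinant, because the nonzero eigenvalues of $H-\Id$ together with their algebraic multiplicities all live in $F$.

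None of these steps is a real obstacle. The only points needing care are the index convention in $A$ and the independence-of-subspace argument in the third step. If the $f_i$ were dependent, one would first merge terms so that the $f$'s become independent; the formula needs independence only so that $A$ is the matrix of $H|_F$ in an actual basis.
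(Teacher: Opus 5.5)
Your proof is correct and follows essentially the paper's argument: the paper works on $V=\mathrm{span}\{f_1,\dots,f_k,g_1,\dots,g_k\}$ in the basis $f_i,g_i$, notes that $H$ is the identity on $V^\perp$, and reads off a block upper triangular matrix with diagonal blocks $A$ and $\Id$, which is your third step with $W=V$. Your reading $A_{ij}=\delta_{ij}+\langle g_i,f_j\rangle$ is the right one, and starting from $F=\mathrm{span}\{f_i\}$ has the small advantage of making unnecessary the paper's separate treatment of the case where the $g_j$ depend on the $f_i$ or on each other.
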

\proof Consider first the case where the vectors $f_j$, $g_j$ are all
independent, denote by $V$ the linear space generated by
$f_i,...,f_k,g_1,...,g_k$. Then both $V$ and $V^\perp$ are invariant
under $H$. Furthermore $H\big|_{V^\perp}=\Id$, so its determinant
gives just a contribution 1 to the complete determinant. We compute
the determinant of $H\big|_ V$. To this end we use the basis given by
the vectors $f_i,g_i$. Remark that one has
\begin{align}
  \label{basis}
  Hf_j&=f_j+\sum_{i=1}^k\langle g_i,f_j\rangle f_i,
  \\
  \label{basis.1}
  Hg_j&=g_j+\sum_{i=1}^k\langle g_i,g_j\rangle f_i,
  \end{align}
which give the decomposition of $Hf_j$ and $Hg_j$ on the considered
basis. As a consequence the matrix of $H\big|_V$ in such a basis has
the structure
$$
\begin{pmatrix} A & *
  \\
  0&\Id
  \end{pmatrix},
$$
from which the result immediately follows. In the case where the
vectors $g_j$ depend on the $f$'s or are dependent among them, just
keep in the definition of $V$ only those which are independent. \qed

	\bibliographystyle{alpha}
	\bibliography{BibliografiaTotsimone}

\begin{thebibliography}{BHMP26}

\bibitem[BHMP26]{BHMP}
D.~Bambusi, E.~Haus, Simone Marrocco, and M.~Procesi.
\newblock On the integrability of the {K}irchhoff-{P}ohozaev equation on tori.
\newblock {\em arXiv:2609.01891}, preprint 2026.

\bibitem[BM25]{BoitiManfrin2025}
C.~Boiti and R.~Manfrin.
\newblock A third-order conservation law for the {K}irchhoff-{P}okhozhaev
  equation.
\newblock {\em arXiv:2512.12380}, preprint 2025.

\bibitem[BM26]{BoitiManfrin2026}
C.~Boiti and R.~Manfrin.
\newblock A {K}irchhoff equation with infinite conservation laws.
\newblock {\em Nonlinear Analysis}, 273:114219, 2026.

\bibitem[BY93]{BirmanYafaev93}
M.~Sh. Birman and D.~R. Yafaev.
\newblock The spectral shift function. {T}he work of {M}. {G}. {K}rein and its
  further development.
\newblock {\em St. Petersburg Math. J.}, 4(5):833--870, 1993.

\bibitem[CKV26]{campos}
L.~Campos, R.~Killip, and M.~Visan.
\newblock {E}xistence and equicontinuity of solutions to the
  {K}irchhoff--{P}ohozaev wave equation.
\newblock {\em arXiv:2608.25047}, preprint 2026.

\bibitem[GGK90]{GohbergGoldbergKaashoek1990}
I.~Gohberg, S.~Goldberg, and M.~A. Kaashoek.
\newblock {\em Classes of Linear Operators. {V}ol. {I}}, volume~49 of {\em
  Operator Theory: Advances and Applications}.
\newblock Birkh{\"a}user Verlag, Basel, 1990.

\bibitem[GGK00]{GohbergGoldbergKrupnik2000}
I.~Gohberg, S.~Goldberg, and N.~Krupnik.
\newblock {\em Traces and Determinants of Linear Operators}, volume 116 of {\em
  Operator Theory: Advances and Applications}.
\newblock Birkh{\"a}user Verlag, Basel, 2000.

\bibitem[GK69]{GohbergKrein1969}
I.~C. Gohberg and M.~G. Kre\u{\i}n.
\newblock {\em Introduction to the Theory of Linear Nonselfadjoint Operators},
  volume~18 of {\em Translations of Mathematical Monographs}.
\newblock American Mathematical Society, Providence, RI, 1969.
\newblock Translated from the Russian by A. Feinstein.

\bibitem[GK14]{NLS}
B.~Gr\'ebert and T.~Kappeler.
\newblock {\em The defocusing {NLS} equation and its normal form}.
\newblock EMS Series of Lectures in Mathematics. European Mathematical Society
  (EMS), Z\"urich, 2014.

\bibitem[Kat95]{kato}
T.~Kato.
\newblock {\em {P}erturbation Theory for {L}inear {O}perators}.
\newblock Classics in Mathematics. Springer-Verlag, Berlin, Heidelberg, 2nd
  edition, 1995.

\bibitem[KP03]{KAMeKDV}
T.~Kappeler and J.~P\"oschel.
\newblock {\em Kd{V} \& {KAM}}, volume~45 of {\em Ergebnisse der Mathematik und
  ihrer Grenzgebiete. 3. Folge. A Series of Modern Surveys in Mathematics
  [Results in Mathematics and Related Areas. 3rd Series. A Series of Modern
  Surveys in Mathematics]}.
\newblock Springer-Verlag, Berlin, 2003.

\bibitem[Kre53]{Krein53}
M.~G. Krein.
\newblock On the trace formula in perturbation theory.
\newblock {\em Mat. Sbornik N.S.}, 33(75)(3):597--626, 1953.
\newblock (in Russian).

\bibitem[Kre83]{Krein83}
M.~G. Krein.
\newblock On certain new studies in the perturbation theory for selfadjoint
  operators.
\newblock {\em Operator theory}, 7:107--172, 1983.

\bibitem[Poh85]{P2}
S.~I. Poho{\v z}aev.
\newblock A quasilinear hyperbolic kirchhoff equation.
\newblock {\em Differentsial'nye Uravneniya}, 21(1):101--108, 1985.

\bibitem[Sim05]{Simon2005}
B.~Simon.
\newblock {\em Trace Ideals and Their Applications}, volume 120 of {\em
  Mathematical Surveys and Monographs}.
\newblock American Mathematical Society, Providence, RI, second edition, 2005.

\bibitem[Yaf07]{yafaev}
D.~R. Yafaev.
\newblock The schr{\"o}dinger operator: Perturbation determinants, the spectral
  shift function, trace identities, and all that.
\newblock {\em Functional Analysis and Its Applications}, 41:217--236, 2007.

\end{thebibliography}
\end{document}